\renewenvironment{proof}[1][\proofname] {\par\pushQED{\qed}\normalfont\topsep6\p@\@plus6\p@\relax\trivlist\item[\hskip\labelsep\bfseries#1\@addpunct{.}]\ignorespaces}{\popQED\endtrivlist\@endpefalse}
\newtheorem{theorem}{\bf Theorem}[section]
\newtheorem{lemma}[theorem]{\bf Lemma}
\newtheorem{corollary}[theorem]{\bf Corollary}
\newtheorem{conjecture}[theorem]{\bf Conjecture}
\newtheorem{problem}[theorem]{\bf Problem}
\theoremstyle{definition}
\newtheorem{remark}[theorem]{\bf Remark}
\newtheorem{definition}[theorem]{\bf Definition}
\def\eps{\varepsilon}
\title{On locally rainbow colourings}
\author{Barnab\'as Janzer\thanks{Department of Pure Mathematics and Mathematical Statistics, University of Cambridge, United Kingdom. Research supported by EPSRC DTG. Email: \textbf{janzer.barnabas@gmail.com}.}
	\and
Oliver Janzer\thanks{Department of Pure Mathematics and Mathematical Statistics, University of Cambridge, United Kingdom. Research supported by a fellowship at Trinity College. Email: \textbf{oj224@cam.ac.uk}.}}
\date{}
\begin{document}

\maketitle

\begin{abstract}
    Given a graph $H$, let $g(n,H)$ denote the smallest $k$ for which the following holds. We can assign a $k$-colouring $f_v$ of the edge set of $K_n$ to each vertex $v$ in $K_n$ with the property that for any copy $T$ of $H$ in $K_n$, there is some $u\in V(T)$ such that every edge in $T$ has a different colour in $f_u$.
    
    The study of this function was initiated by Alon and Ben-Eliezer. They characterized the family of graphs $H$ for which $g(n,H)$ is bounded and asked whether it is true that for every other graph $g(n,H)$ is polynomial. We show that this is not the case and characterize the family of connected graphs $H$ for which $g(n,H)$ grows polynomially. Answering another question of theirs, we also prove that for every $\eps>0$, there is some $r=r(\eps)$ such that $g(n,K_r)\geq n^{1-\eps}$ for all sufficiently large $n$.
    
    Finally, we show that the above problem is connected to the Erdős--Gyárfás function in Ramsey Theory, and prove a family of special cases of a conjecture of Conlon, Fox, Lee and Sudakov by showing that for each fixed $r$ the complete $r$-uniform hypergraph $K_n^{(r)}$ can be edge-coloured using a subpolynomial number of colours in such a way that at least $r$ colours appear among any $r+1$ vertices. \end{abstract}

\section{Introduction}

\subsection{Local rainbow colourings} Estimating the minimum possible size of a program that computes specific Boolean functions is a major research area in Theoretical Computer Science. In 1993, Karchmer \cite{Kar93} introduced the so-called fusion method for finding circuit lower bounds. This technique unifies and generalizes the topological method of Sipser \cite{Sip84} and the approximation method of Razborov \cite{Raz89}. Karchmer and Wigderson \cite{KW93,Wig93} demonstrated that proving lower bounds for circuit sizes can be reduced to extremal combinatorics problems, and Wigderson \cite{Wig93} presented three problems that arise this way. One of them is as follows.

\begin{problem}[Karchmer and Wigderson \cite{Wig93}] \label{prob:Wig}
    Given a positive integer $n$, estimate the smallest $k$ for which the following is true. There exist colourings $c_1,\dots,c_n$ of the $n$-dimensional cube $\{0,1\}^n$ with $k$ colours such that for any three distinct $x,y,z\in \{0,1\}^n$ there is a coordinate $i\in [n]$ such that $x_i$, $y_i$ and $z_i$ are not all equal and the three colours $c_i(x)$, $c_i(y)$ and $c_i(z)$ are pairwise distinct.
\end{problem}

Karchmer and Wigderson \cite{KW93} proved that $k$ has to grow with $n$; more precisely that $k$ needs to be at least $\Omega(\frac{\log \log^* n}{\log \log \log^* n})$, where $\log^*n$ is the smallest integer $m$ such that applying the function $\log_2(x)$ iteratively $m$ times, starting with input $n$, one obtains a number not exceeding $1$.

Alon and Ben-Eliezer \cite{AB11} improved this significantly by showing that $k$ needs to be at least $\Omega((\frac{\log n}{\log \log n})^{1/4})$. As part of their approach, they introduced the following problem, which will be our main focus in this paper.

\begin{definition}
    Let $n$ be a positive integer and let $H$ be a graph. For each vertex $v$ of a given clique $K_n$, let $f_v$ be a (not necessarily proper) colouring of the edges of the same $K_n$. We say that the collection of these $n$ colourings is $(n,H)$-local if for any copy $T$ of $H$ in $K_n$, there exists some $u\in V(T)$ such that all edges of $T$ receive different colours in $f_u$.
\end{definition}

\begin{problem}[Alon and Ben-Eliezer \cite{AB11}] \label{prob:AB}
    Let $g(n,H)$ be the smallest $k$ for which there is a collection of colourings, each using $k$ colours, which is $(n,H)$-local. Estimate the growth of $g(n,H)$ as $n\rightarrow \infty$. 
\end{problem}

To see the connection to Problem \ref{prob:Wig}, note that $g(n,P_3)$ is a lower bound for the smallest possible $k$ in the Karchmer--Wigderson problem. (Here and below, $P_{\ell}$ denotes the path with $\ell$ edges.) Indeed, we can think of the $n$ coordinates of $\{0,1\}^n$ as the $n$ vertices of $K_n$ and the elements of Hamming weight 2 in $\{0,1\}^n$ as edges in $K_n$. A valid collection of colourings in Problem \ref{prob:Wig} is then necessarily an $(n,P_3)$-local colouring, since we may choose $x,y,z$ to be three sets in $\{0,1\}^n$ which correspond to the three edges of some $P_3$. Alon and Ben-Eliezer showed that $g(n,P_3)=\Omega((\frac{\log n}{\log \log n})^{1/4})$, implying the same lower bound for the problem of Karchmer and Wigderson.

Alon and Ben-Eliezer also studied Problem \ref{prob:AB} for general graphs $H$. They characterized the family of graphs for which $g(n,H)$ is bounded.

\begin{theorem}[Alon and Ben-Eliezer \cite{AB11}] \label{thm:bounded}
    For a fixed graph $H$, there is a constant $c(H)$ such that $g(n,H)\leq c(H)$ for every $n$ if and only if $H$ contains at most $3$ edges and $H$ is neither $P_3$ nor $P_3$ together with any number of isolated vertices. Moreover, in all these cases $g(n,H)\leq 5$ for every $n$.
\end{theorem}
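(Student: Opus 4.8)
The plan is to prove the two implications separately. For the ``if'' direction it suffices to exhibit, for each graph with at most three edges that is not $P_3$, a collection of colourings using at most five colours that is local. Isolated vertices can be discarded at the outset: if $H_0$ is obtained from $H$ by deleting its isolated vertices, then any $(n,H_0)$-local collection is also $(n,H)$-local, since a witness for a copy of $H_0$ is a vertex of any copy of $H$ extending it. This leaves the finitely many isolated-vertex-free graphs with at most three edges other than $P_3$, namely $K_2$, $P_2$, $2K_2$, $3K_2$, $K_{1,3}$, $K_3$ and $P_2\cup K_2$. I would handle all of them with one construction: fix a linear order on $V(K_n)$, and colour, for each vertex $v$ and edge $e$, according to which of five cases holds -- $v$ is the smaller endpoint of $e$, the larger endpoint, lies below both endpoints, strictly between them, or above both. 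A short case analysis then provides a witness in each case: the median vertex of a triangle, the median leaf of $K_{1,3}$, a vertex straddling one edge of $3K_2$ and lying outside the other two, and (splitting into cases according to the order of the five vertices) a suitable endpoint for $P_2\cup K_2$; the cases with at most two edges are trivial. I would not write this verification out in full.

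For the ``only if'' direction I must show that $g(n,H)\to\infty$ whenever $H$ has at least four edges, and also when $H$ is $P_3$ together with isolated vertices. The main device is a Ramsey reduction. Given any collection $\{f_v\}$ of $k$-colourings, colour each $3$-subset $S$ of $V(K_n)$ by the list of the values $f_v(e)$ as $v$ ranges over $S$ and $e$ over the three edges inside $S$; for fixed $k$, Ramsey's theorem for $3$-uniform hypergraphs gives a set $W$, with $|W|\to\infty$ as $n\to\infty$, on which this colouring is constant. One checks that on $W$ the value $f_v(e)$ depends only on the relative order of $v$ and the two endpoints of $e$, and so equals a fixed colour $\alpha$, $\beta$, $\gamma_1$, $\gamma_2$ or $\gamma_3$ according to whether $v$ is the smaller endpoint of $e$, the larger endpoint, below $e$, between its endpoints, or above $e$. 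The point is that inside $W$ one can write down, for each forbidden $H$, a copy no vertex of which is a witness, \emph{whatever} the constants $\alpha,\beta,\gamma_1,\gamma_2,\gamma_3$ are. For $P_3$, take $a<b<c<d$ in $W$ and the non-monotone path $a-c-b-d$: on its three edges $f_a$ reads $(\alpha,\gamma_1,\gamma_1)$, $f_b$ reads $(\gamma_2,\alpha,\alpha)$, $f_c$ reads $(\beta,\beta,\gamma_2)$ and $f_d$ reads $(\gamma_3,\gamma_3,\beta)$, none of which is rainbow. For $P_3$ with $t$ isolated vertices, adjoin $t$ further vertices of $W$, all larger than $a,b,c,d$; each of them sees all three path-edges with colour $\gamma_3$, so none is a witness. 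Analogous finite checks produce witness-free copies of $K_{1,4}$ (centre the minimum of the copy, leaves the next four), of $4K_2$ (four consecutive pairs), of $K_3\cup K_2$ (triangle on $\{v_1,v_3,v_5\}$, edge $\{v_2,v_4\}$, so that the straddling vertices receive the colour $\gamma_2$ twice), and of $2P_2$, $P_2\cup 2K_2$ and $K_{1,3}\cup K_2$.

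To reduce the general case to these, I would use the monotonicity $g(n,H')\le g(n,H)$, valid whenever $H'$ is a subgraph of $H$ on the same vertex set (a copy of $H'$ extends, on its vertex set, to a copy of $H$, whose witness lies in that set), together with the elementary fact that a graph with no $P_3$-subgraph is a disjoint union of stars, triangles, single edges and isolated vertices. Hence any $H$ with at least four edges either contains $P_3$, in which case $g(n,H)\ge g(n,H'')$ where $H''$ is $P_3$ together with $|V(H)|-4$ isolated vertices and so $g(n,H)\to\infty$; or it contains, as a spanning subgraph, one of $K_{1,4},\,4K_2,\,K_3\cup K_2,\,2P_2,\,P_2\cup 2K_2,\,K_{1,3}\cup K_2$ with some isolated vertices adjoined, and again $g(n,H)\to\infty$.

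The main obstacle is the lower bound, above all the statement $g(n,P_3)\to\infty$, which is exactly what yields the Karchmer--Wigderson bound. The $3$-uniform Ramsey reduction above is clean but wasteful, giving only an iterated-logarithm-type lower bound; Alon and Ben-Eliezer's sharper $\Omega((\log n/\log\log n)^{1/4})$ requires replacing hypergraph Ramsey by a more delicate iterated-pigeonhole argument on the colours the $f_v$ assign to the ``far'' edges of a path. For the characterisation itself, however, the qualitative bound suffices, so the remaining effort is bookkeeping: verifying the five-colour construction in each bounded case, compiling the list of minimal unbounded graphs, and checking that each admits a witness-free copy inside a homogeneous set.
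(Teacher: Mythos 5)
This theorem is stated in the paper as a cited result of Alon and Ben-Eliezer (their reference [AB11]); the paper contains no proof of it, so there is no ``paper's own proof'' to compare against. Your proposal appears to be a correct reconstruction, and both halves (the fixed-order five-colour construction for the upper bound, and hypergraph Ramsey producing an order-invariant five-colour structure for the lower bound) are natural devices for this statement.

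Two of your deferred checks deserve a second look. For $3K_2$ the description ``a vertex straddling one edge and lying outside the other two'' is not quite right: in a matching such as $\{1,2\},\{3,4\},\{5,6\}$ on $1<\cdots<6$, no vertex of the matching straddles another edge. Nonetheless a witness exists (take $3$: it is above $\{1,2\}$, below $\{5,6\}$, and an endpoint of $\{3,4\}$), and indeed one can always find a vertex of the matching whose positions relative to the two edges not containing it differ. More substantively, for $K_{1,3}\cup K_2$ inside the homogeneous set $W$ the ``separated'' placement fails: with centre $c$ below leaves $l_1<l_2<l_3$ and the extra edge entirely above, the middle leaf $l_2$ reads $\beta,\gamma_3,\gamma_2,\gamma_1$ on the four edges and is a witness regardless of whether these values are distinct. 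You need an arrangement in which the $K_2$ straddles the star, e.g.\ $d_1<l_1<c<l_2<l_3<d_2$, so that $l_2$ reads $\gamma_2$ on both $\{c,l_3\}$ and $\{d_1,d_2\}$; with this choice no vertex is a witness. Provided you spell out such an arrangement for each of your six minimal $P_3$-free graphs, the argument goes through. Your concluding remark is also apt: the nine-tuple colouring uses $k^9$ colours, so $3$-uniform Ramsey yields only a tower-type lower bound, which suffices for the qualitative characterisation but not for the quantitative $\Omega\bigl((\log n/\log\log n)^{1/4}\bigr)$ bound for $P_3$.
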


They used the local lemma to obtain the following general upper bound.

\begin{theorem}[Alon and Ben-Eliezer \cite{AB11}] \label{thm:local lemma bound}
    Let $H$ be a fixed graph with $r$ vertices. Then $g(n,H)=O(r^4n^{1-\frac{2}{r}})$.
\end{theorem}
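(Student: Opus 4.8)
The plan is the standard application of the symmetric Lovász Local Lemma. We may assume $r\ge 2$, the case $r=1$ being trivial. Let $k$ be a sufficiently large absolute constant times $r^4 n^{1-2/r}$, write $[k]=\{1,\dots,k\}$, and for every vertex $v$ and every edge $e'$ of $K_n$ choose the colour $f_v(e')\in[k]$ independently and uniformly at random. For each copy $T$ of $H$ in $K_n$, let $A_T$ be the ``bad'' event that for \emph{every} $u\in V(T)$ the edges of $T$ fail to receive pairwise distinct colours under $f_u$. By definition, the collection $(f_v)_v$ is $(n,H)$-local if and only if no $A_T$ occurs, so it suffices to show that this happens with positive probability.

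First I would bound $\Pr[A_T]$. Under a single colouring $f_u$, the $e(H)\le\binom r2$ edges of $T$ receive independent uniform colours, so by a union bound over pairs of edges the probability that they are \emph{not} pairwise distinct is at most $\binom{e(H)}{2}/k\le r^4/(8k)$. Since the colourings $f_u$ for distinct $u$ are mutually independent, these failure events are independent over the $|V(T)|=r$ vertices of $T$, and therefore
\[
\Pr[A_T]\le\left(\frac{r^4}{8k}\right)^{r}.
\]
Exploiting all $r$ vertices of $T$ here is the crux: the single-vertex estimate is useless on its own, while this $r$-th power is exactly what the local lemma can absorb.

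Next I would set up the dependency graph, joining $A_T$ and $A_{T'}$ whenever $E(T)\cap E(T')\neq\emptyset$; this is a valid dependency graph since $A_T$ is a function only of the variables $f_u(e')$ with $u\in V(T)$ and $e'\in E(T)$, whose edge-coordinates all lie in $E(T)$. For the maximum degree, note that a fixed edge of $K_n$ lies in at most $2e(H)\,n^{r-2}$ copies of $H$ (choose which edge of $H$ it represents, its orientation, and an embedding of the remaining $r-2$ vertices), and $T$ has only $e(H)$ edges, so each $A_T$ is adjacent to at most $d\le 2e(H)^2 n^{r-2}=O(r^4 n^{r-2})$ other bad events. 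It then remains to check the hypothesis $\mathrm{e}\cdot\Pr[A_T]\cdot(d+1)\le1$ of the symmetric local lemma (here $\mathrm e$ is Euler's number). With $k=C\,r^4 n^{1-2/r}$ we get $\Pr[A_T]\le\bigl(8C\,n^{1-2/r}\bigr)^{-r}=(8C)^{-r}n^{-(r-2)}$, so the factor $n^{-(r-2)}$ coming from $\Pr[A_T]$ cancels the factor $n^{r-2}$ in $d+1$ exactly, leaving $\mathrm{e}\cdot\Pr[A_T]\cdot(d+1)=O\!\bigl(r^4/(8C)^r\bigr)$, which is at most $1$ for every $r\ge2$ once $C$ is a large enough absolute constant. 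The local lemma then produces an outcome avoiding all $A_T$, giving an $(n,H)$-local collection of $k$-colourings and hence $g(n,H)=O(r^4 n^{1-2/r})$.

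There is no genuine obstacle here; the argument is the textbook local-lemma computation, and the step needing the most care is simply the bookkeeping of the exponents. One has to notice that the independence of the $n$ colourings allows one to raise the single-vertex failure probability to the $r$-th power, and then that the resulting $n^{-(r-2)}$ is precisely what neutralises the $n^{r-2}$ growth of the dependency degree — this is exactly why the exponent $1-2/r$ appears. The rest is routine estimation of the constants.
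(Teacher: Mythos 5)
Your argument is correct and follows essentially the same route as the original: the paper only cites this bound from Alon and Ben-Eliezer \cite{AB11} without reproving it, and their proof is exactly this symmetric Lov\'asz Local Lemma computation (independent uniform colourings, the $r$-th power of the single-vertex failure probability $\binom{e(H)}{2}/k$, and a dependency degree $O(r^4 n^{r-2})$ from edge-sharing copies). All the exponent bookkeeping in your write-up checks out.
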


They also proved polynomial lower bounds for various small graphs and used this to obtain the following result.

\begin{theorem}[Alon and Ben-Eliezer \cite{AB11}] \label{thm:13 edges}
    For any graph $H$ with at least 13 edges, there is a constant $b=b(H)>0$ such that $g(n,H)=\Omega(n^b)$.
\end{theorem}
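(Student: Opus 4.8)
The plan is to reduce the assertion to polynomial lower bounds for a short list of small ``critical'' graphs and then to establish those lower bounds directly. Two monotonicity facts are available. First, if $H_0$ is a \emph{spanning} subgraph of $H$ (same vertex set, fewer edges), then any $(n,H)$-local family is $(n,H_0)$-local — given a copy of $H_0$ in $K_n$, add back the missing edges on the same vertices to obtain a copy of $H$, whose rainbow vertex still lies in the copy of $H_0$ — so $g(n,H)\ge g(n,H_0)$. Second, adjoining isolated vertices cannot increase $g$. By the second fact, writing $H^\ast$ for $H$ with its isolated vertices deleted, $H^\ast$ has at least $13$ edges and minimum degree at least $1$, hence at most $26$ vertices, so $H^\ast$ ranges over a finite set; and by the first fact it suffices to find a spanning subgraph of $H^\ast$ lying in a manageable list $\mathcal F$ of graphs whose $g$-values are known to be polynomial. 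Note that the isolated vertices of $H$ must still be handled separately, since the trivial bound for them points the wrong way.

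Next I would prove the structural claim: every graph on at most $26$ vertices with at least $13$ edges and no isolated vertex has a spanning subgraph belonging to $\mathcal F$, where $\mathcal F$ is a short explicit list — I expect it to contain a path $P_\ell$ of a suitable bounded length, a star $K_{1,\ell}$ of bounded size, a matching $\ell K_2$ of bounded size, and a few small dense graphs such as $K_4$, $C_4$ or the bowtie (each possibly together with some isolated vertices). This is a finite but delicate case analysis: if the graph has a high-degree vertex one keeps enough edges at it to form a large star; if it has a long path one trims it and discards the remaining edges; if it is sparse and spread out one extracts a large matching; and otherwise it lives on few vertices and is dealt with one graph at a time. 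The value $13$ is exactly the threshold that makes these branches fit together.

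For each $F\in\mathcal F$, and for $F$ together with any fixed number of isolated vertices, I would prove $g(n,F)=\Omega(n^{b})$ for some $b>0$ (depending on $F$ and on the number of isolated vertices) directly: assuming a $k$-colour $(n,F)$-local family with $k$ subpolynomial, produce a copy of $F$ in $K_n$ on which no vertex is rainbow. For the star this is a pigeonhole argument on the palette each vertex uses on its incident edges; for the path and the matching one repeatedly applies pigeonhole to colour classes together with a greedy choice of the vertices of the copy; and the small dense cases follow from the Alon--Ben-Eliezer-style arguments already available for such graphs. To accommodate the isolated vertices one additionally wants the bad copy to be ``robustly'' bad — all but a constant number of outside vertices are also non-rainbow on it — so that the extra isolated vertices can be placed on such vertices; for a fixed number of them this costs only a constant factor in the exponent.

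The main obstacle, I expect, is making the structural reduction and the isolated-vertex bookkeeping cohabit: the list $\mathcal F$ must be short enough that the direct lower bounds are feasible, rich enough that every graph with at least $13$ edges reduces to a member of it as a spanning subgraph (after stripping isolated vertices), and each member's lower bound must survive the reattachment of isolated vertices. Pinning down the right $\mathcal F$ and verifying that $13$ edges suffice for the reduction is where the bulk of the work lies.
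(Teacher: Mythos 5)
You should first note that the paper does not actually prove Theorem~\ref{thm:13 edges}: it is quoted from Alon and Ben-Eliezer \cite{AB11}. What the paper does prove (Section~\ref{sec:lower}) is the much stronger Theorem~\ref{thm:characterize}, where $5$ edges suffice, and it does so by precisely the strategy you outline: spanning-subgraph monotonicity, a structural reduction to a short explicit list of small graphs, direct polynomial lower bounds for the list members, and a separate lemma to reattach isolated vertices. So your architecture is the right one. For the last step, though, the standard and cleaner device (used in Section~\ref{sec:lower}) is the product-colouring reduction $g(n,H)\leq g(n,H^+)^{s+1}$, which converts a polynomial lower bound for $H$ into one for $H$ plus an isolated vertex as a black box; your ``robustly bad copy'' idea can be made to work but has to be threaded through every base-case argument separately.

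That said, as written your proposal defers exactly the two components that carry all the mathematical content, so it is a plan rather than a proof. The list $\mathcal{F}$ and the structural lemma ``every graph with at least $13$ edges contains a member of $\mathcal{F}$'' are neither specified nor verified, and the polynomial lower bounds for the members of $\mathcal{F}$ are only gestured at. These base cases are not routine single pigeonhole arguments on palettes: the engine the paper uses (Theorem~\ref{thm:nice graphs}, which covers stars, matchings and the other $4$-edge cases) is a two-stage argument --- first find one pair of edges $p,q$ that is monochromatic in $f_v$ for $\Omega(n/k)$ vertices $v$, then, inside that vertex set, find a second pair of edges that is \emph{simultaneously} monochromatic in $f_v$ for every $v\in p\cup q$ --- and the combinatorial condition defining ``nice'' is engineered exactly so that the two pairs can be installed as the required disjoint parts of a copy of $H$. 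For $K_{1,4}$, say, the centre and the leaves must be fooled by different mechanisms, and a one-shot pigeonhole does not do it. There is also a genuine error in your reduction: from ``at least $13$ edges and minimum degree at least $1$'' you cannot conclude ``at most $26$ vertices,'' since $13$ is a lower bound on the number of edges; $H^\ast$ does not range over a finite set. This is repairable (pass to a subgraph with exactly $13$ edges and reattach the newly isolated vertices via the isolated-vertex lemma), but the finiteness claim as stated is false.
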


They posed three concrete open problems in their paper.

\begin{enumerate}
    \item Improve the bounds for $g(n,P_3)$.

    \item Is it true that if $g(n,H)$ is unbounded, then it grows polynomially?

    \item Is it true that for every $\eps>0$ there is some $r=r(\eps)$ such that $g(n,K_r)\geq n^{1-\eps}$ for every sufficiently large $n$?
\end{enumerate}

The first problem is well motivated by its connection to Problem \ref{prob:Wig}. The second one is motivated by Theorem \ref{thm:13 edges}. The third one is motivated by Theorem \ref{thm:local lemma bound} and the fact that if $H'$ is a subgraph of $H$ on the same set of vertices, then $g(n,H')\leq g(n,H)$.

Some progress on these questions was made by Cheng and Xu \cite{CX22}. They showed that $g(n,P_{\ell})$ is polynomial in $n$ for every $\ell\geq 4$. Combined with other new bounds for small graphs, they used this to prove that if $H$ is a graph with at least $6$ edges, then $g(n,H)$ is polynomial, improving Theorem~\ref{thm:13 edges}. Finally, they showed that $g(n,K_r)=\Omega(n^{2/3})$ holds for all $r\geq 8$, which can be seen as progress towards answering the third question above.

In this paper, we answer the second and third question of Alon and Ben-Eliezer, and also show that $g(n,P_3)$ grows subpolynomially, which essentially answers the first question as well.

\begin{theorem} \label{thm:P3}
    We have $g(n,P_3)=n^{o(1)}$.
\end{theorem}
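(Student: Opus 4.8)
The plan is to deduce Theorem~\ref{thm:P3} from a hypergraph-colouring statement: that the triples of $[n]$ can be coloured with $n^{o(1)}$ colours in such a way that among the four triples contained in any four vertices, at least three colours occur. Granting this, the reduction is short. Let $\chi\colon\binom{[n]}{3}\to[k]$ be such a colouring and let $\star$ be one extra colour. For each vertex $v$ of $K_n$ define $f_v$ by $f_v(vx)=\star$ for every edge $vx$ through $v$, and $f_v(xy)=\chi(\{v,x,y\})$ whenever $v\notin\{x,y\}$. Let $T$ be a copy of $P_3$, a path with consecutive vertices $a,b,c,d$ and edges $ab,bc,cd$. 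Both $ab$ and $bc$ have colour $\star$ in $f_b$, so $f_b$ is not rainbow on $T$, and likewise $f_c$; hence only $a$ or $d$ can be a witness. Now $f_a$ is rainbow on $T$ exactly when $\star,\chi(\{a,b,c\}),\chi(\{a,c,d\})$ are distinct, i.e.\ (as $\star$ is new) when $\chi(\{a,b,c\})\neq\chi(\{a,c,d\})$; similarly $f_d$ is rainbow on $T$ exactly when $\chi(\{a,b,d\})\neq\chi(\{b,c,d\})$. If both failed, then the four triples of $\{a,b,c,d\}$ would carry at most two colours, contrary to the choice of $\chi$. Thus $g(n,P_3)\le k+1=n^{o(1)}$.

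It remains to construct $\chi$ with $k=n^{o(1)}$; this is precisely the $r=3$ case of the assertion that $K_n^{(r)}$ can be coloured with subpolynomially many colours so that any $r+1$ vertices span at least $r$ colours. A natural approach is a recursive construction. Split $[n]$ into $m$ equal blocks. Colour each triple lying inside one block by a recursively built colouring of that block, reusing the same palette for every block; colour each triple meeting at least two blocks using a disjoint palette, by a colour recording how the triple is distributed among the blocks together with whatever coarse data about the blocks involved is required. Since the two palettes are disjoint, a quadruple inside a single block already sees at least three colours by the recursion, while a quadruple meeting at least two blocks is handled by the top-level colouring --- there being only boundedly many ways a quadruple can be distributed among blocks. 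If the top level uses $T(m)$ colours then $C(n)\le C(n/m)+T(m)$, and by choosing $m$ appropriately (possibly letting it grow slowly with $n$) so that the recursion depth $\Theta(\log_m n)$ and the per-level cost stay in balance, one hopes to reach $k=n^{o(1)}$.

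The reduction above is the easy half: its one real point is that an interior vertex of a $P_3$ meets two of its edges and so can never witness it, which forces any witness to be an endpoint, whence if no endpoint works the four triples of the endpoint quadruple use at most two colours --- and this is exactly what our colouring of triples forbids. The genuine difficulty lies entirely in the recursive construction of $\chi$: the cross-block colouring must be made cheap yet strong enough to force three colours on every quadruple meeting two or more blocks, and --- this is the crux --- the block sizes and the data stored at each level must be organised so that the colour count stays subpolynomial rather than blowing up geometrically through the $\Theta(\log_m n)$ levels of recursion. Managing this accumulation is the main obstacle, and it is here that the connection to the Erdős--Gyárfás function, and the bulk of the technical work, resides.
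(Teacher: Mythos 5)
Your reduction is exactly the paper's proof: colour every edge through $v$ with a fresh colour and every edge $xy$ avoiding $v$ with $\gamma(\{v,x,y\})$, then use the fact that any four vertices span at least three colours among their triples to conclude that one of $f_a,f_d$ is rainbow. The hypergraph colouring you need is not something you must construct --- it is precisely the theorem of Conlon, Fox, Lee and Sudakov that $f_3(n,4,3)\le e^{(\log n)^{2/5+o(1)}}$, which the paper simply cites, so your (admittedly incomplete) recursive sketch can be replaced by a citation.
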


Together with the lower bound $g(n,P_3)=\Omega((\frac{\log n}{\log \log n})^{1/4})$ of Alon and Ben-Eliezer, Theorem \ref{thm:P3} answers the second question of theirs in the negative. Our next result answers their third question affirmatively.

\begin{theorem} \label{thm:cliques}
    For each $\ell\geq 2$, we have $g(n,C_{2\ell})=\Omega(n^{1-\frac{2}{\ell+1}})$. Consequently, for any even $r\geq 4$, $g(n,K_r)=\Omega\left(n^{1-\frac{4}{r+2}}\right)$.
\end{theorem}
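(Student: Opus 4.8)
The second assertion is immediate from the first: by the monotonicity noted above, $g(n,H')\le g(n,H)$ whenever $H'$ is a subgraph of $H$ on the same vertex set, and $K_{2\ell}$ (being Hamiltonian) contains a spanning $C_{2\ell}$. Hence $g(n,K_{2\ell})\ge g(n,C_{2\ell})=\Omega(n^{1-2/(\ell+1)})$, and with $r=2\ell$ this reads $g(n,K_r)=\Omega(n^{1-4/(r+2)})$ for even $r\ge 4$. So the task is to prove the lower bound for $C_{2\ell}$.

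Suppose $\{f_v\}_{v\in V(K_n)}$ is an $(n,C_{2\ell})$-local collection in which each $f_v$ uses $k$ colours, and assume for contradiction that $k<cn^{1-2/(\ell+1)}$ for a small constant $c=c(\ell)$. For a vertex $v$ write $\phi_v$ for the colouring of $V(K_n)\setminus\{v\}$ given by $\phi_v(w)=f_v(vw)$; it partitions $V(K_n)\setminus\{v\}$ into at most $k$ colour classes. I will produce a copy of $C_{2\ell}$, with cyclic vertex order $v_1v_2\cdots v_{2\ell}$, that is \emph{monochromatic at every vertex}: $\phi_{v_i}(v_{i-1})=\phi_{v_i}(v_{i+1})$ for all $i$ (indices mod $2\ell$), i.e. every $v_i$ assigns the two cycle-edges at $v_i$ the same colour in $f_{v_i}$. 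No vertex of such a copy is rainbow on it, so it is not witnessed by the locality condition, a contradiction.

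The case $\ell=2$ exhibits the mechanism. For a pair $\{a,b\}$ put $W(a,b)=\{w:\phi_w(a)=\phi_w(b)\}\subseteq V(K_n)\setminus\{a,b\}$. Counting incidences, $\sum_{\{a,b\}}|W(a,b)|=\sum_w\#\{\text{monochromatic pairs of }\phi_w\}$, and by convexity $\#\{\text{monochromatic pairs of }\phi_w\}\ge\binom{n-1}{2}/k$, so some pair $\{a_1,a_2\}$ has $|W(a_1,a_2)|\gtrsim n/k$. If $k^3\ll n$ then $|W(a_1,a_2)|>k^2$, so the product colouring $w\mapsto(\phi_{a_1}(w),\phi_{a_2}(w))$, which has at most $k^2$ classes, is constant on some pair $\{b_1,b_2\}\subseteq W(a_1,a_2)$. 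Then $a_1b_1a_2b_2$ is a $C_4$ that is monochromatic at every vertex (at $b_1,b_2$ because $b_1,b_2\in W(a_1,a_2)$, and at $a_1,a_2$ because $b_1,b_2$ share a class of $\phi_{a_1}$ and of $\phi_{a_2}$), giving $g(n,C_4)=\Omega(n^{1/3})=\Omega(n^{1-2/3})$.

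For general $\ell$ I would realise $C_{2\ell}$ as the union of two internally disjoint paths of length $\ell$ between vertices $x$ and $y$, calling a path $x=p_0p_1\cdots p_\ell=y$ \emph{good} if $\phi_{p_j}(p_{j-1})=\phi_{p_j}(p_{j+1})$ for every internal $j$. If $P,Q$ are good, internally disjoint $x$–$y$ paths of length $\ell$ whose first internal vertices lie in a common class of $\phi_x$ and whose last internal vertices lie in a common class of $\phi_y$, then $P\cup Q$ is a $C_{2\ell}$ monochromatic at every vertex. The plan is then: (i) lower bound the number of good paths of length $\ell$, the key input being again that $\sum_{C}|C|^2\ge(n-1)^2/k$ over the classes $C$ of $\phi_v$ for each $v$ (which controls good \emph{walks}, from which one passes to genuine vertex-disjoint ones); (ii) average to find $x,y$ with many good length-$\ell$ paths between them; (iii) bucket these paths by the pair $(\text{class of the first internal vertex under }\phi_x,\ \text{class of the last under }\phi_y)\in[k]^2$, take the largest bucket, and inside it choose two internally disjoint paths — possible once the bucket size exceeds $k^2$ times an appropriate "number of good length-$\ell$ paths through a fixed vertex" quantity. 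Carrying the exponents through this should give exactly the threshold $k\sim n^{1-2/(\ell+1)}$, with $\ell=2$ recovering the argument above.

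The hard part is executing (i)–(iii) with the sharp exponent. A colouring in which some $\phi_v$ has one enormous colour class breaks the naive count (good paths are abundant, but so are the through-a-fixed-vertex counts), and I expect one must split according to the sizes of the largest colour classes: in the regime where many vertices have a dominant class the desired copy should be found essentially at random inside those classes, while in the complementary "spread out" regime all classes are of moderate size and a clean supersaturation-style count of good paths is available. Arranging that the disjointness in step (iii) costs only a constant factor, and checking that the bookkeeping yields $1-2/(\ell+1)$ rather than a weaker exponent, is where I anticipate the bulk of the work.
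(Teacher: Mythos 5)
Your framing is exactly the paper's: the reduction from $K_r$ to $C_r$ is the same, and you have identified the right target object — a copy of $C_{2\ell}$ with cyclic order $v_1\cdots v_{2\ell}$ such that $f_{v_i}(v_{i-1}v_i)=f_{v_i}(v_iv_{i+1})$ for every $i$ — whose existence contradicts $(n,C_{2\ell})$-locality. Your $\ell=2$ argument is correct and matches the known $\Omega(n^{1/3})$ bound for $C_4$.

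For general $\ell$, however, your steps (i)–(iii) are a sketch with a genuine gap, and you yourself name it: controlling disjointness and handling a colouring in which some $\phi_v$ has one very large class. Naively counting good length-$\ell$ \emph{walks} is easy, but passing to internally disjoint good \emph{paths} and then to two such paths forming a cycle is precisely the hard part — and it is not a bookkeeping issue. This is exactly the content of the tool the paper invokes, a generalized Bondy--Simonovits theorem from [Jan23, Theorem~3.1]: if a graph $G$ on $N$ vertices has $\ge CN^{1+1/\ell}$ edges and $\sim$ is a symmetric relation on $V(G)$ such that any vertex has at most $s$ neighbours $\sim$-related to any fixed vertex, then $G$ contains a $2\ell$-cycle with no two vertices $\sim$-related. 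The paper sidesteps your path-counting entirely by building an auxiliary graph $G$ on $V(K_n)\times[k]$, joining $(u,i)$ to $(v,j)$ iff $f_u(uv)=i$ and $f_v(uv)=j$; then $e(G)=\binom{n}{2}$ and $|V(G)|=nk$, so with $k\le cn^{1-2/(\ell+1)}$ the density threshold is met, and declaring $(u,i)\sim(v,j)$ iff $u=v$ satisfies the sparsity hypothesis with $s=1$. The $\sim$-avoiding $2\ell$-cycle in $G$ projects to exactly the monochromatic-at-every-vertex $C_{2\ell}$ you want. So your plan is on the right track conceptually, but to complete it you would essentially have to reprove that lemma — the heavy-class dichotomy and the disjointness bookkeeping you anticipate are where the real work of [Jan23] lives, and without that result (or an equivalent argument) the general-$\ell$ case is not established.
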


\begin{remark} \label{remark:odd cliques}
    Using a variant of the proof of Theorem \ref{thm:cliques}, we can also prove that for each sufficiently large odd $r$, we have $g(n,K_r)=\Omega\left(n^{1-\frac{10}{r-3}}\right)$, showing that the exponent tends to $1$ in this case as well.
\end{remark}

We also obtain a near-complete characterization of the family of graphs $H$ for which $g(n,H)$ is polynomial. The only graph $H$ for which we cannot decide whether $g(n,H)$ is polynomial is the disjoint union of a $P_3$ and a $P_1$ (together with an arbitrary number of isolated vertices).

\begin{theorem} \label{thm:characterize}
    Let $H$ be a graph which is not the disjoint union of $P_3$ and $P_1$ together with an arbitrary number of isolated vertices. Then there exists some $b=b(H)>0$ such that $g(n,H)=\Omega(n^b)$ if and only if $H$ has at least $5$ edges or $H$ has precisely $4$ edges and is triangle-free.
\end{theorem}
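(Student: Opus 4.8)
The plan is to prove the two directions of the equivalence separately, using throughout two monotonicity properties of $g$: that $g(n,H')\le g(n,H)$ whenever $H'$ is a spanning subgraph of $H$ (noted in the introduction), and that $g(n,H\cup K_1)\le g(n,H)$, since a vertex of a copy of $H$ witnessing the local property is still a vertex of the corresponding copy of $H\cup K_1$. Combining these, if $J$ is a subgraph of $H$ and $J'$ is $J$ together with $|V(H)|-|V(J)|$ isolated vertices, then $J'$ is a spanning subgraph of $H$, so $g(n,H)\ge g(n,J')$; hence to prove $g(n,H)=\Omega(n^{b})$ it is enough to find a subgraph $J$ of $H$ such that $g(n,J'')=\Omega(n^{b})$ for every graph $J''$ obtained from $J$ by adding isolated vertices, and to prove $g(n,H)=n^{o(1)}$ it is enough to treat $H$ up to isolated vertices.

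For the ``only if'' direction, argued contrapositively, suppose $H$ is not $P_3\cup P_1$ plus isolated vertices and has either at most $3$ edges, or exactly $4$ edges together with a triangle. If $H$ has at most $3$ edges, then by Theorem~\ref{thm:bounded} either $g(n,H)$ is bounded, or $H$ is $P_3$ plus isolated vertices and then $g(n,H)\le g(n,P_3)=n^{o(1)}$ by Theorem~\ref{thm:P3}. If $H$ has $4$ edges and a triangle, then up to isolated vertices $H$ is either the \emph{paw} (a triangle with a single pendant edge) or $K_3\cup P_1$, and it suffices to show $g(n,\mathrm{paw})=n^{o(1)}$ and $g(n,K_3\cup P_1)=n^{o(1)}$. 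For these we construct explicit collections of $n^{o(1)}$ colours as a ``product'' of: the $P_3$-colourings of Theorem~\ref{thm:P3}; a bounded $(n,K_3)$-local collection (which exists by Theorem~\ref{thm:bounded}); and one or two auxiliary collections of $n^{o(1)}$ colours designed to separate the extra edge from the edges of the triangle. In a product colouring two edges receive different colours at a vertex whenever some factor separates them, so a product is local for every graph that one of its factors is local for; the work --- and the main obstacle of this direction --- is to check that for \emph{every} copy of the paw (resp.\ $K_3\cup P_1$) the product exhibits a vertex at which all four edges get distinct colours, which calls for a case analysis over which triangle vertex is a $K_3$-rainbow witness, verifying that the auxiliary collections then separate the remaining edge there, or else that a vertex of the extra edge works.

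For the ``if'' direction, Cheng and Xu \cite{CX22} proved that $g(n,H)$ is polynomial whenever $H$ has at least $6$ edges, so it remains to treat graphs with exactly $5$ edges and triangle-free graphs with exactly $4$ edges other than $P_3\cup P_1$ plus isolated vertices. The plan has two steps. (a) Prove polynomial lower bounds, robust to adding isolated vertices, for a short explicit list $\mathcal{C}$ of ``critical'' graphs: $C_4$ (from the $\ell=2$ case of Theorem~\ref{thm:cliques}, which gives $g(n,C_4)=\Omega(n^{1/3})$), $P_4$ (from \cite{CX22}, or re-derived), the star $K_{1,4}$, the ``chair'' ($K_{1,3}$ with a pendant edge on one leaf), and the forests $K_{1,3}\cup P_1$, $P_2\cup P_2$, $P_2\cup 2P_1$ and $4P_1$. (b) Show by a routine case analysis --- deleting a suitable edge from a $5$-edge graph, or inspecting the triangle-free $4$-edge graphs directly --- that every $H$ under consideration has a subgraph in $\mathcal{C}$, while the exceptional graph $P_3\cup P_1$ plus isolated vertices can always be avoided; together with the reduction above this yields $g(n,H)=\Omega(n^{b})$. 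For (a), the graphs $K_{1,4}$, $K_{1,3}\cup P_1$, $P_2\cup P_2$, $P_2\cup 2P_1$, $4P_1$ are all disjoint unions of stars, so they (and many of the $5$-edge graphs, which are themselves star forests) can be handled uniformly by proving $g(n,F)=\Omega(n^{c})$ for every star forest $F$ with at least $4$ edges; since isolated vertices are trivial stars this is automatically robust. The chair, being connected but not a star and containing neither $C_4$ nor $P_4$, needs a similar but separate argument.

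The genuinely hard part of the whole proof is this star-forest lower bound, already for $K_{1,4}$: given colourings with $k$ colours, one must locate a copy of $K_{1,4}$ with centre $c$ and leaves $x_1,\dots,x_4$ such that none of $c,x_1,\dots,x_4$ gives the four edges $cx_1,\dots,cx_4$ all distinct colours. The obstacle is the asymmetry between the centre and the leaves: after passing to a large colour class of $f_c$ only boundedly many colours of $f_c$ are still in play there, whereas each leaf colours the three edges not incident with it in a completely uncontrolled manner, so a straightforward iterated pigeonhole over the leaves fails. The plan is a more delicate selection --- using that if $k$ is small then $f_x$ collapses large parts of any candidate leaf set for many vertices $x$, and combining several such collapses with the remaining freedom in the last leaf --- to produce four leaves that defeat all five colourings simultaneously, contradicting $(n,K_{1,4})$-locality and so forcing $k\ge n^{c}$ for a fixed $c>0$.
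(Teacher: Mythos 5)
Your proposal reproduces the correct skeleton of the argument --- the same case decomposition, the same list of critical $4$-edge graphs, and a correct identification of where the difficulty lies --- but it leaves both essential new ingredients unproven, so as it stands it is an outline rather than a proof.

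For the ``only if'' direction, the two graphs you must handle are indeed the paw $T_p$ and the triangle plus a disjoint edge $T_e$, but your plan (``take a product of the $P_3$-colourings, a bounded $(n,K_3)$-local collection, and some auxiliary collections, then do a case analysis'') never specifies the auxiliary colourings, and you yourself flag the verification as ``the main obstacle''. Note also that locality is \emph{anti}-monotone in the edge set: an $(n,P_3)$-local or $(n,K_3)$-local collection gives no information about $T_p$, since $g(n,P_3)\leq g(n,T_p)$ and not the other way around, so ``a product is local for every graph one of its factors is local for'' does not get you started. The paper's actual construction for $T_p$ colours edges through $v$ by a triangle-Ramsey colouring $\delta$ with $O(\log n)$ colours and edges avoiding $v$ by the Erd\H{o}s--Gy\'arf\'as $(4,3)$-colouring $\gamma$ of triples; the point is that $\delta$ rules out one of the two failure modes at two of the triangle's vertices, forcing two relations among $\gamma$-values of triples inside a $4$-set, contradicting the $(4,3)$-property. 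For $T_e$ the paper does not need $\gamma$ at all and gets $O(\log n)$ colours from a ``first differing coordinate'' colouring together with a $\max$ construction. Neither of these appears in your sketch.

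For the ``if'' direction, the lower bound for the six graphs beyond $C_4$ and $P_4$ is exactly the part you defer (``the genuinely hard part \dots a more delicate selection''), and the heuristic you give for $K_{1,4}$ is not an argument. The paper handles all six uniformly via the notion of a \emph{nice} graph (distinct edges $e_1,e_2,f_1,f_2$ with $(e_1\cup e_2)\cap(f_1\cup f_2)\subseteq f_1\cap f_2$) and a clean two-stage pigeonhole: first fix a pair $p,q$ (the images of $e_1,e_2$) together with $\Omega(n/k)$ vertices $v$ satisfying $f_v(p)=f_v(q)$, then choose the images of $f_1,f_2$ inside that vertex set so that they agree in $f_v$ for all $v\in p\cup q$ simultaneously; the niceness condition is precisely what decouples the two stages. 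This gives $\Omega(n^{1/6})$ for all six graphs at once and avoids the centre/leaf asymmetry you struggle with. Finally, your reduction relies on lower bounds that are ``robust to adding isolated vertices'', but for $C_4$ and $P_4$ you only cite bounds for the graphs themselves; since $g(n,J\cup K_1)\leq g(n,J)$ goes the wrong way, you additionally need the paper's product-colouring lemma $g(n,H)\leq g(n,H^+)^{s+1}$ (where $H^+$ adds an isolated vertex), which you neither state nor prove.
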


In particular, we obtain a full characterization of the family of connected graphs $H$ for which $g(n,H)$ is polynomial.

\begin{corollary} \label{cor:characterize connected}
    Let $H$ be a connected graph. Then there exists some $b=b(H)>0$ such that $g(n,H)=\Omega(n^b)$ if and only if $H$ has at least $4$ edges and $H$ is different from the triangle with a pendant edge.
\end{corollary}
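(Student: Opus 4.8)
The plan is to obtain Corollary \ref{cor:characterize connected} as an immediate consequence of Theorem \ref{thm:characterize}. The first step is to observe that a connected graph $H$ is never the disjoint union of a $P_3$ and a $P_1$ together with isolated vertices: such a graph has at least two nontrivial components and is therefore disconnected. Hence every connected $H$ is covered by Theorem \ref{thm:characterize}, which tells us that $g(n,H)=\Omega(n^b)$ for some $b=b(H)>0$ precisely when $H$ has at least $5$ edges, or $H$ has exactly $4$ edges and is triangle-free.

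It then remains to verify that, for connected graphs, this dichotomy matches the one stated in the corollary, namely that $H$ has at least $4$ edges and is not the triangle with a pendant edge. One direction is immediate, since the triangle with a pendant edge has exactly four edges and contains a triangle, so any $H$ with at least five edges, or with exactly four edges and no triangle, automatically has at least four edges and is not that exceptional graph. For the converse, the only case needing attention is a connected $H$ with exactly four edges that is not the triangle with a pendant edge; I would check that such an $H$ must be triangle-free. If $H$ contained a triangle on $\{a,b,c\}$, then the three edges of that triangle are all present, so the unique remaining edge cannot lie inside $\{a,b,c\}$; and since $H$ is connected it cannot join two vertices both outside $\{a,b,c\}$ either. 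So this edge is pendant, forcing $H$ to be the triangle with a pendant edge, a contradiction.

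There is no real obstacle here: all the substance sits in Theorem \ref{thm:characterize}, and the corollary is just a matter of noting that connectivity rules out the single exceptional disconnected graph appearing there, together with the short edge-counting argument above showing that, for connected graphs with four edges, ``contains a triangle'' is equivalent to ``is the triangle with a pendant edge''.
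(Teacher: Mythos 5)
Your proposal is correct and is exactly the intended derivation: the paper states the corollary as an immediate consequence of Theorem \ref{thm:characterize}, using precisely the observations that a connected graph cannot be the exceptional disconnected graph $P_3\cup P_1$ (plus isolated vertices), and that a connected graph with exactly four edges containing a triangle must be the triangle with a pendant edge. Your short edge-counting verification of the latter fact is sound.
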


\subsection{The Erdős--Gyárfás function}\label{subsec:IntroEGy}

We will see (in Section~\ref{sec:upper}) that local rainbow colourings for certain graphs $H$ are related to the Erdős--Gyárfás function in Ramsey Theory (especially to Theorem~\ref{theorem_EGy43} below). In this subsection we describe the Erdős--Gyárfás problem, and state a new result resolving a family of special cases of a conjecture of Conlon, Fox, Lee and Sudakov~\cite{conlon2015grid,conlon2015recent}.

\begin{definition}
    Let $p,q,r,n\geq 2$ be positive integers with $q\leq \binom{p}{r}$. An edge-colouring of the $r$-uniform complete hypergraph $K_n^{(r)}$ is a $(p,q)$-colouring if at least $q$ distinct colours appear among any $p$ vertices. Let $f_r(n,p,q)$ be the smallest positive integer $k$ such that there exists a $k$-colouring of the edges of $K_n^{(r)}$ forming a $(p,q)$-colouring.
\end{definition}

The function $f_r(n,p,q)$ was introduced by Erdős and Shelah~\cite{erdos1975problems}, and first studied in  more detail by Erdős and Gyárfás~\cite{erdHos1997variant} (for $r=2$). Let us first consider the graph case $r=2$. When $q=2$, then a $(p,q)$-colouring is simply a colouring which avoids monochromatic sets of size $p$, so as a special case we get the classical multicolour Ramsey problem. In particular, $f_2(n,3,2)$ (and hence $f_2(n,p,2)$) is at most logarithmic in $n$. On the other extreme, when $q=\binom{p}{2}$, we trivially have $f_2(n,p,\binom{p}{2})=\binom{n}{2}$ (as long as $p\geq 4$). The function $f_r(n,p,q)$ is clearly increasing in $q$, and Erdős and Gyárfás~\cite{erdHos1997variant} investigated how the behaviour of $f_2(n,p,q)$ changes as $q$ increases from $2$ to $\binom{p}{2}$. Among other results, they proved that when $p=q$, it is polynomial in $n$, i.e., $f_2(n,p,p)=\Omega(n^{\alpha_p})$ for some $\alpha_p>0$. They asked if this is the smallest value of $q$ for which $f_2(n,p,q)$ is polynomial in $n$, i.e., whether or not $f_2(n,p,p-1)$ is subpolynomial in $n$.

The first difficult case $p=4$ was settled by Mubayi~\cite{mubayi1998edge}, who gave a construction showing that $f_2(n,4,3)=n^{o(1)}$. This was first extended to $p=5$ as well by Eichhorn and Mubayi~\cite{eichhorn2000}, and later Conlon, Fox, Lee and Sudakov~\cite{conlon2015erdHos} proved that $f_2(n,p,p-1)$ is subpolynomial for all $p$, fully answering this question of Erdős and Gyárfás.
\begin{theorem}[Conlon, Fox, Lee and Sudakov~\cite{conlon2015erdHos}]\label{theorem_EGygraphs}
	For any fixed $p\geq 4$, we have $$f_2(n,p,p-1)\leq e^{(\log{n})^{1-1/(p-2)+o(1)}}=n^{o(1)}.$$
\end{theorem}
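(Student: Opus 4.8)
The plan is to obtain the colouring by a recursion that builds a $(p,p-1)$-colouring from a $(p-1,p-2)$-colouring, with the multicolour Ramsey bound as the base case. Equivalently, for each $q$ running from $2$ up to $p-1$ we produce a colouring of $K_n^{(2)}$ in which every $p$ vertices span at least $q$ colours, passing from one value of $q$ to the next by an \emph{amplification} step that simultaneously blows up the ground set by a large power. The base case $q=2$ just asks for a colouring with no monochromatic $K_p$ (a colouring without monochromatic triangles already does), which the Ramsey bound supplies with $O_p(\log n)$ colours, and after $p-3$ amplification steps the guarantee has reached the required $p-1$.

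\textbf{The amplification step.} Its statement has the shape: if $K_m^{(2)}$ carries a colouring $\chi$ with $c$ colours in which every $p$ vertices span at least $q-1$ colours (together with a short list of auxiliary structural properties that the construction preserves), then for a suitable blow-up parameter $a$ the hypergraph $K_{m^a}^{(2)}$ carries a colouring in which every $p$ vertices span at least $q$ colours, using a number of colours that one controls in terms of $c$ and $a$ (essentially a power of $c$ whose exponent is proportional to $a$). One identifies the vertex set with $[m]^a$. The two most natural rules fail: colouring $\{x,y\}$ by the full tuple $\bigl(\chi(x_i,y_i)\bigr)_{i\in[a]}$ (with a placeholder when $x_i=y_i$) is defeated by $p$ vertices that differ in only one coordinate, since then just the few colours of $\chi$ on that coordinate appear; and colouring $\{x,y\}$ by the least index $i^{\ast}$ on which $x$ and $y$ differ together with $\chi(x_{i^{\ast}},y_{i^{\ast}})$ is defeated by grid-like configurations, which collapse many edges onto one colour. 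The genuine rule (whose case $p=4$ goes back to Mubayi) is a more intricate hybrid, combining first-difference information with $\chi$-data at the various scales and a carefully bounded quantity of tie-breaking data — say an auxiliary colouring of the index set and/or of the common prefix — whose only purpose is to separate edges that the other ingredients would otherwise conflate; the cost of this tie-breaking data is exactly what prevents $a$ from being taken small, and hence is what makes the final bound super-polylogarithmic.

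\textbf{Verifying the amplification.} To check that the new colouring guarantees at least $q$ colours on every $p$-set, fix $p$ vertices and form the laminar family of all their pairwise first-difference indices; this organises the vertices into a rooted tree whose internal nodes are ``splitting indices'', each edge of the $p$-clique being attached to the node at which its two endpoints part. One then argues that (i) edges attached to distinct nodes get distinct colours, because the splitting index is recorded; (ii) the edges attached to a fixed node, when restricted to the active coordinate, form a sub-colouring inherited from $\chi$, so the hypothesis on $\chi$ forces many colours among them; and (iii) the tie-breaking data handles the residual degenerate configurations — above all two sibling subtrees that the rest of the colour would treat identically. A somewhat delicate case analysis on the shape of the tree combines these observations into the bound of at least $q$ colours.

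\textbf{Iteration and optimisation.} Applying the amplification step $p-3$ times starting from the $O(\log n)$-colour Ramsey base and optimising the blow-up parameter at each level — balancing the growth of the palette against the growth $m\mapsto m^a$ of the ground set — one finds that if the level-$q$ colouring of $K_n^{(2)}$ uses $e^{(\log n)^{\beta_q+o(1)}}$ colours, then $\beta_q$ satisfies the recursion $\beta_q=1/(2-\beta_{q-1})$ with $\beta_2=0$, which solves to $\beta_q=1-\tfrac{1}{q-1}$. Taking $q=p-1$ gives the bound $f_2(n,p,p-1)\le e^{(\log n)^{1-1/(p-2)+o(1)}}$, which is in particular $n^{o(1)}$ since the exponent is below $1$. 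The main obstacle is the amplification step itself: designing the tie-breaking data so that it is simultaneously cheap enough to keep the colour count on target and rich enough to destroy every degenerate configuration, carrying out the tree case analysis cleanly, and making sure that the auxiliary structural properties of $\chi$ used in the argument are reproduced by the amplified colouring, so that the recursion actually closes.
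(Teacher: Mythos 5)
This statement is quoted in the paper from Conlon, Fox, Lee and Sudakov; the paper itself contains no proof of it, so your proposal can only be judged on its own merits.

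Your high-level roadmap is the right one — it matches the known argument of Mubayi ($p=4$) and of Conlon--Fox--Lee--Sudakov in general: identify the vertex set with $[m]^a$, amplify a $(p,q-1)$-colouring into a $(p,q)$-colouring while raising the number of vertices to a power, start from a Ramsey-type base case, and optimise the blow-up parameter; your recursion $\beta_q=1/(2-\beta_{q-1})$, $\beta_2=0$, does solve to $\beta_q=1-\frac{1}{q-1}$ and yields the stated exponent at $q=p-1$. However, there is a genuine gap at the heart of the argument: you never define the amplified colouring. You correctly observe that the two naive rules (the full coordinatewise tuple, and first-difference index plus the colour at that index) both fail, and then describe the actual rule only as ``a more intricate hybrid'' with ``a carefully bounded quantity of tie-breaking data'' whose existence and cost you do not establish. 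The entire difficulty of the theorem lives exactly there: one must exhibit explicit auxiliary data (in the actual proof, things like the restriction of $\chi$ to all pairs of coordinates in the interval between relevant splitting indices, plus colourings recording order patterns) and prove both that it is cheap enough that the colour count closes the recursion and that it kills every degenerate configuration in the tree case analysis, which you also leave as ``a somewhat delicate case analysis.'' You likewise invoke unspecified ``auxiliary structural properties that the construction preserves'' without stating them, so the induction is not actually closed. As written, this is a well-informed plan for a proof, not a proof: the amplification lemma — its precise statement, the colouring realising it, and its verification — is asserted rather than established.
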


Consider now the Erdős--Gyárfás function for general uniformity $r$ (this is the setting in which Erdős and Shelah~\cite{erdos1975problems} originally introduced the problem). Answering a question of Graham, Rothschild and Spencer~\cite{graham1990ramsey}, Conlon, Fox, Lee and Sudakov~\cite{conlon2015grid} proved that $f_3(n,4,3)$ is subpolynomial in $n$. (This has close connections to the proof of Shelah~\cite{shelah1988primitive} of primitive recursive bounds for the Hales--Jewett theorem, see~\cite{conlon2015grid} for details.)

\begin{theorem}[Conlon, Fox, Lee and Sudakov~\cite{conlon2015grid}]\label{theorem_EGy43}
We have $$f_3(n,4,3)\leq e^{(\log n)^{2/5+o(1)}}=n^{o(1)}.$$
\end{theorem}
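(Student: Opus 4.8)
The plan is to construct the colouring recursively, taking as building blocks the graph case of the Erdős--Gyárfás problem (Theorem~\ref{theorem_EGygraphs} with $p=4$, which gives $f_2(N,4,3)\le e^{(\log N)^{1/2+o(1)}}$) together with the elementary fact that a set of $4$ vertices of $K_n^{(3)}$ spans exactly $\binom{4}{3}=4$ triples, one for each vertex it omits; a bad $4$-set is then precisely one on which the map sending an omitted vertex to the colour of the corresponding triple takes at most $2$ values.

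First I would fix a small alphabet $[m]$ and a length $t$, identify the roughly $m^{t}$ vertices with strings in $[m]^{t}$, and colour a triple $\{x,y,z\}$ of distinct strings according to the pattern at the first coordinate $i$ where $x_i,y_i,z_i$ are not all equal. If the three values are distinct there, colour by $(i,\chi_m(\{x_i,y_i,z_i\}))$, where $\chi_m$ is a $(4,3)$-colouring of $K_m^{(3)}$ supplied by the induction hypothesis. If exactly two of them coincide, say $x_i=y_i\ne z_i$, record $i$ together with a $(4,3)$-colouring of the pair $\{x_i,z_i\}$ in $[m]$, and --- the key ingredient --- a ``distinguishing colour'' for the ordered structure of $x$ and $y$ on the remaining coordinates, obtained by applying a graph $(4,3)$-colouring (Theorem~\ref{theorem_EGygraphs}) to a suitable bounded piece of their common suffix. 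When all three strings agree on an initial segment, one simply passes to shorter strings, so the recursion terminates.

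The main work is to prove that this rule is $(4,3)$: given four distinct strings, one looks at the shallowest coordinate at which they are not all equal and splits into cases according to whether the four values there split as $3{+}1$, $2{+}2$, $2{+}1{+}1$, or become fully separated only after descending further, and in each case checks that the four triple-colours cannot collapse to two. The genuinely hard case is the $2{+}2$ split, where two strings branch one way and two the other: a naive colouring gives the two triples inside a branch the same colour and is symmetric between the branches, so one needs the ``distinguishing colour'' above to be a real graph $(4,3)$-colouring --- strong enough that two vertices in a branch of size $\ge 2$ already force several colours, while the colour attached to the light value at coordinate $i$ links the two branches. Designing the rule so that this case (and the others) actually goes through is, I expect, the principal obstacle; it is also the point at which a merely logarithmic (Ramsey-type) colouring provably does not suffice, which is why a subpolynomial but superpolylogarithmic bound is what comes out.

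It remains to optimise parameters. The construction yields a recursion of the form $F_3(m^{t})\le \mathrm{poly}(t)\cdot\bigl(F_3(m)+f_2(N,4,3)^{O(1)}\bigr)$ for the number $F_3(n)$ of colours used, where $N$ is the size of the ground set on which the graph colouring is invoked. Iterating this $O(\log\log\log n)$ times, choosing each length $t$ to be a small power of the current logarithm so that the accumulated $\mathrm{poly}(t)$ overheads multiply to only $e^{(\log n)^{o(1)}}$, one arranges that the dominant term is a single graph $(4,3)$-colouring applied to a set of size $\exp\bigl((\log n)^{4/5+o(1)}\bigr)$. Evaluating the graph exponent $\tfrac12$ at this scale produces $F_3(n)\le e^{(\log n)^{2/5+o(1)}}$, the exponent $2/5=\tfrac12\cdot\tfrac45$ reflecting that the graph colouring is used on a ground set whose logarithm is a $\tfrac45$-power of $\log n$. (This construction is essentially the one exhibiting the link between $f_3(n,4,3)$ and the grid Ramsey problem.)
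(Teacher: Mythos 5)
The paper does not prove this statement: it is quoted verbatim from Conlon, Fox, Lee and Sudakov~\cite{conlon2015grid} and used as a black box (as the base case of the induction in Theorem~\ref{theorem_EGygeneral}, and, restated as Theorem~\ref{thm:hypergraphEGy}, in the proofs of Theorems~\ref{thm:P3} and~\ref{thm:pendant}). So there is no internal proof to compare against; the following concerns your proposal on its own terms.

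Your sketch has the right shape (identify vertices with strings, colour a triple by its first branching coordinate plus auxiliary data, recurse, and feed in a graph $(4,3)$-colouring), and you correctly single out the $2{+}2$ split as the crux. But as written the rule does not get past that case, and this is a genuine gap rather than a deferrable detail. Suppose the four strings $a,b,c,d$ first disagree at coordinate $i$ with $a_i=b_i\neq c_i=d_i$. Then every one of the four triples is of the ``exactly two coincide'' type at coordinate $i$, and by your rule $\{a,b,c\}$ receives the data $\bigl(i,\ \text{pair-colour of }\{a_i,c_i\},\ D(a,b)\bigr)$, where $D(a,b)$ is the distinguishing colour computed from the common suffix of $a$ and $b$ alone. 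Since $a_i=b_i$ and $c_i=d_i$, the pair $\{a_i,c_i\}$ is the same for all four triples; hence $\{a,b,c\}$ and $\{a,b,d\}$ receive the identical colour $\bigl(i,P,D(a,b)\bigr)$, and $\{a,c,d\}$ and $\{b,c,d\}$ receive the identical colour $\bigl(i,P,D(c,d)\bigr)$, where $P$ denotes that common pair-colour. No choice of $D$, however strong the graph $(4,3)$-colouring you plug in, produces more than two colours on this $4$-set, because the crucial information---which of the two vertices in the other branch completes the triple---never enters the colour. Fixing this requires the auxiliary data to depend on the third vertex of the triple, not just on the two coinciding strings, and doing that without an exponential blow-up in the number of colours is precisely the technical heart of the CFLS construction and the point where the grid Ramsey problem enters. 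You name the obstacle but do not overcome it, and the parameter optimisation in your final paragraph cannot be carried out (nor the exponent $2/5$ be derived) until the colouring rule is actually specified in a form that passes the $2{+}2$ case.
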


Conlon, Fox, Lee and Sudakov~\cite{conlon2015grid} also proved that $f_r(n,p,\binom{p-1}{r-1}+1)$ is at least polynomial in $n$. In light of this result, as well as Theorem~\ref{theorem_EGygraphs} and Theorem~\ref{theorem_EGy43}, they proposed the following conjecture.

\begin{conjecture}[Conlon, Fox, Lee and Sudakov~\cite{conlon2015grid,conlon2015recent}]\label{conjecture_EGy}
	For any positive integers $p$ and $r$ with $2\leq r<p$, $$f_r\left(n,p,\binom{p-1}{r-1}\right)=n^{o(1)}.$$
\end{conjecture}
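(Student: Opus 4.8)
The plan is to establish Conjecture~\ref{conjecture_EGy} by a double induction, on the uniformity $r$ and on $p$. The base case of the induction on $r$ is $r=2$, which is Theorem~\ref{theorem_EGygraphs}; the base case of the inner induction on $p$, for any fixed $r$, is $p=r+1$, which is precisely the statement that $K_m^{(r)}$ admits an edge-colouring with $m^{o(1)}$ colours in which at least $r$ colours appear among any $r+1$ vertices, proved in this paper. For the inductive step, fix $r\ge 3$ and $p\ge r+2$, and suppose that for every $m$ we have an $(r-1)$-uniform $(p-1,\binom{p-2}{r-2})$-colouring of $K_m^{(r-1)}$ using $m^{o(1)}$ colours (by the induction on $r$), and an $r$-uniform $(p-1,\binom{p-2}{r-1})$-colouring of $K_m^{(r)}$ using $m^{o(1)}$ colours (by the induction on $p$). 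We want to combine these into an $r$-uniform $(p,\binom{p-1}{r-1})$-colouring of $K_n^{(r)}$ using $n^{o(1)}$ colours. The count is driven entirely by Pascal's identity $\binom{p-1}{r-1}=\binom{p-2}{r-2}+\binom{p-2}{r-1}$.

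Concretely, fix a linear order on the vertex set of $K_n^{(r)}$ and colour each $r$-set $S$ by the pair $\big(\phi(S\setminus\{\min S\}),\,\psi(S)\big)$, where $\phi$ is the $(r-1)$-uniform $(p-1,\binom{p-2}{r-2})$-colouring and $\psi$ is the $r$-uniform $(p-1,\binom{p-2}{r-1})$-colouring; this uses at most $n^{o(1)}\cdot n^{o(1)}=n^{o(1)}$ colours. Now let $P$ be a set of $p$ vertices with minimum element $u$, and split the $\binom{p}{r}$ $r$-subsets of $P$ according to whether they contain $u$. Those containing $u$ are exactly the sets $\{u\}\cup T$ with $T\in\binom{P\setminus\{u\}}{r-1}$, and for each such set the first coordinate of its colour is $\phi(T)$; since $P\setminus\{u\}$ has $p-1$ vertices and $\phi$ is a $(p-1,\binom{p-2}{r-2})$-colouring, these first coordinates already take at least $\binom{p-2}{r-2}$ distinct values. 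The $r$-subsets of $P$ not containing $u$ all lie inside the $(p-1)$-set $P\setminus\{u\}$, so since $\psi$ is a $(p-1,\binom{p-2}{r-1})$-colouring their second coordinates take at least $\binom{p-2}{r-1}$ distinct values. Hence $P$ sees at least $\binom{p-2}{r-2}+\binom{p-2}{r-1}=\binom{p-1}{r-1}$ colours \emph{provided} the colour set contributed by the $r$-sets through $u$ is disjoint from that contributed by the $r$-sets avoiding $u$.

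This disjointness is the crux, and the reason the conjecture is hard: the colouring $\psi$ treats $u$ like any other vertex, so $\psi(\{u\}\cup T)$ may well coincide with $\psi(S)$ for some $S\subseteq P\setminus\{u\}$, and a fixed linear order gives the colouring no uniform way to record that an $r$-set contains the extremal vertex of its ambient $p$-set. A natural source of the missing separation is a hierarchical construction in the spirit of Mubayi~\cite{mubayi1998edge} and Conlon--Fox--Lee--Sudakov~\cite{conlon2015grid,conlon2015erdHos}: identify the vertex set with $[m]^t$ and colour an $r$-set by the first coordinate in which its members do not all agree, together with a base colouring applied to the projections to that coordinate and recursively-defined data on the earlier, block-constant coordinates. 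Such constructions are flexible enough that one can hope to tune the base colouring and the recursion so that the two colour families above become genuinely disjoint, while the iteration over the $t=\log_m n$ coordinates keeps the overall palette at $n^{o(1)}$. Carrying this out — and, crucially, propagating the disjointness property through both inductions simultaneously — is the main obstacle, and for $r\ge 3$ and $p\ge r+2$ it appears to require ideas going beyond those developed in the present paper.
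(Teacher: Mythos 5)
First, a point of scope: the statement you are asked about is Conjecture~\ref{conjecture_EGy}, which the paper does \emph{not} prove --- it is stated as an open conjecture of Conlon, Fox, Lee and Sudakov, and the paper's contribution is only the family of special cases $p=r+1$ (Theorem~\ref{theorem_EGygeneral}), proved by induction on $r$ using Hajnal's theorem on total orders (Theorem~\ref{theorem_totalorders}) to decide which vertex of an $r$-set to ignore. So there is no ``paper's own proof'' of the full conjecture to compare against, and your proposal, by its own admission, is not a proof either.

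The gap you flag is genuine and is essentially the entire difficulty of the conjecture, not a technical loose end. Your Pascal-identity decomposition correctly shows that the $r$-sets through $u=\min P$ contribute at least $\binom{p-2}{r-2}$ colours (via $\phi$ on the link) and the $r$-sets avoiding $u$ contribute at least $\binom{p-2}{r-1}$ colours (via $\psi$ on $P\setminus\{u\}$), but adding these requires the two colour families to be disjoint, and no colouring of an $r$-set $S$ can encode whether $S$ contains the minimum of the \emph{ambient} $p$-set $P$, since $P$ is not determined by $S$. This is precisely the obstruction that makes the case $p=r+1$ tractable and $p\geq r+2$ hard: when $p=r+1$, every $r$-subset of $P$ misses exactly one vertex, so any four colour-coincident edges $e,e',f,f'$ automatically share a common vertex, and Hajnal's family of $O(\log\log n)$ total orders lets one arrange that this common vertex is the ignored one in some coordinate --- there is no need for the two families in your decomposition to be separated, because the argument works by contradiction from a single common vertex rather than by summing two disjoint contributions. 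For $p\geq r+2$ no such common vertex need exist, and your closing paragraph correctly concedes that the hierarchical ``first differing coordinate'' constructions are only a hope, not an argument. In short: your base cases are both available (Theorem~\ref{theorem_EGygraphs} for $r=2$ and Theorem~\ref{theorem_EGygeneral} for $p=r+1$), but the inductive step is missing, so the proposal does not establish the conjecture; it is an accurate description of why the conjecture remains open.
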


Note that the case $r=2$ holds by Theorem~\ref{theorem_EGygraphs}, and Theorem~\ref{theorem_EGy43} is the case $r=3$, $p=4$. As further evidence towards Conjecture~\ref{conjecture_EGy}, Conlon, Fox, Lee and Sudakov~\cite{conlon2015grid} proved that its statement holds for $r=3$, $p=5$ as well.
These results (i.e., $r=2$; or $r=3$ and $p\in\{4,5\}$) are the only known cases of Conjecture~\ref{conjecture_EGy}. We show that Conjecture~\ref{conjecture_EGy} holds whenever $p=r+1$.

\begin{theorem}\label{theorem_EGygeneral}
	For any $r\geq 3$, we have
	$$f_r(n,r+1,r)\leq e^{(\log n)^{2/5+o(1)}}=n^{o(1)}.$$
\end{theorem}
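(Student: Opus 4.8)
The plan is to prove, by induction on $r\ge 3$, the slightly more precise statement that $K_n^{(r)}$ admits a valid $(r+1,r)$-colouring using at most $f_3(n,4,3)^{c_r}$ colours, where $c_r=r!/6$ (so $c_3=1$); the base case $r=3$ is exactly Theorem~\ref{theorem_EGy43}, and plugging that bound in at the end absorbs the constant exponent $c_r$ into the $o(1)$ term, giving $f_r(n,r+1,r)\le e^{(\log n)^{2/5+o(1)}}$. Throughout I use the following trivial reformulation: since any $s+1$ vertices span exactly $s+1$ sub-$s$-sets (call these its \emph{facets}), a colouring of $K_n^{(s)}$ is a valid $(s+1,s)$-colouring precisely when, for any $s+1$ vertices, at most one pair of their $s+1$ facets is monochromatic.

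For the inductive step, fix a linear order on $[n]$ and let $\psi$ be a valid $(r,r-1)$-colouring of $K_n^{(r-1)}$ with at most $f_3(n,4,3)^{c_{r-1}}$ colours. Define $\chi$ on $K_n^{(r)}$ by letting, for $S=\{x_1<\dots<x_r\}$, the $i$-th coordinate of $\chi(S)$ be $\psi(S\setminus\{x_i\})$; that is, $\chi(S)$ records the $\psi$-colours of all facets of $S$, indexed by the rank of the deleted vertex. This uses at most $f_3(n,4,3)^{r\cdot c_{r-1}}=f_3(n,4,3)^{c_r}$ colours, on the same ground set. The claim is that $\chi$ is a valid $(r+1,r)$-colouring.

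Suppose not: there are vertices $v_1<\dots<v_{r+1}$ — write $V$ for this set, $e_a:=V\setminus\{v_a\}$ for its facets, and $W_{\{p,q\}}:=V\setminus\{v_p,v_q\}$ — and two distinct pairs $\{a,b\},\{c,d\}$ with $\chi(e_a)=\chi(e_b)$ and $\chi(e_c)=\chi(e_d)$. The key observation is that each such monochromatic facet-pair of $V$ forces monochromatic facet-pairs of the \emph{$r$-vertex} sets $e_m$: if $m\notin\{a,b\}$ and $a,b$ lie on the same side of $m$ in the order, then $v_m$ has the same rank in $e_a$ and in $e_b$, so equating the corresponding coordinates of $\chi(e_a)=\chi(e_b)$ gives $\psi(W_{\{a,m\}})=\psi(W_{\{b,m\}})$, a monochromatic pair among the facets of the $r$-set $e_m$. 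Hence, whenever there is an index $m\in[r+1]$ outside both intervals spanned by $\{a,b\}$ and by $\{c,d\}$, we get two distinct monochromatic facet-pairs of $e_m$, contradicting validity of $\psi$. If no such $m$ exists — i.e.\ the two intervals cover $\{1,\dots,r+1\}$ — use instead the facets deleting the global extremes: for every $a$, the first coordinate of $\chi(e_a)$ equals $\psi$ of a facet of $e_1$ (namely $W_{\{1,\max(a,2)\}}$) and the last equals $\psi$ of a facet of $e_{r+1}$, so $\chi(e_a)=\chi(e_b)$ yields a monochromatic facet-pair of $e_1$, indexed by $\{\max(a,2),\max(b,2)\}$, and one of $e_{r+1}$. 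A short check shows that the only way for \emph{both} the $e_1$-pairs and the $e_{r+1}$-pairs to coincide or degenerate is $\{\{a,b\},\{c,d\}\}=\{\{1,2\},\{r,r+1\}\}$, whose spanned intervals do \emph{not} cover $\{1,\dots,r+1\}$ once $r\ge 4$; so in the current case at least one of $e_1,e_{r+1}$ receives two distinct monochromatic facet-pairs, again contradicting validity of $\psi$. This closes the induction.

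I expect the delicate part to be precisely this endpoint bookkeeping in the inductive step. The ``generic'' argument — delete a vertex $v_m$ avoiding both spanned intervals — is clean, but it fails exactly when those intervals together cover $\{1,\dots,r+1\}$ (in particular when one monochromatic facet-pair is $\{e_1,e_{r+1}\}$), and one then has to check that the facets omitting the smallest or the largest vertex always fill the gap, with the single residual configuration $\{\{1,2\},\{r,r+1\}\}$ falling back to the generic argument. None of this is deep, but laying out the cases so that no configuration slips through is where the care lies; in every case the conclusion comes from the same mechanism of squeezing two monochromatic facet-pairs into the facets of one $r$-subset.
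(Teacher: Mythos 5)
Your proof is correct, and it takes a genuinely different route from the paper's. The paper's inductive step uses Hajnal's result on covering families of total orders: it chooses $M=O(\log\log n)$ linear orders such that any vertex is a maximum among any $r+1$ vertices in at least one order, and then colours an $r$-set $e$ by the $M$-tuple $\bigl(c(e-\max_j e)\bigr)_{j\le M}$; the common vertex $p\in e\cap e'\cap f\cap f'$ is then guaranteed to be deleted in some coordinate, which immediately contradicts validity of $c$ on $W-p$. You instead fix a single order and record all $r$ facet-colours of $e$ (indexed by rank), which forces a combinatorial case analysis to show that some vertex $m$ can be deleted so that both colliding pairs descend to two distinct monochromatic facet-pairs of $e_m$: the generic case picks $m$ outside both spanned intervals $[a,b]$ and $[c,d]$, and when those intervals cover $[r+1]$ you fall back on $m\in\{1,r+1\}$, with the unique configuration $\{\{1,2\},\{r,r+1\}\}$ where both of those degenerate being handled by the generic case (its intervals do not cover once $r\ge 4$). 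I have checked the rank bookkeeping (if $a,b$ lie on the same side of $m$ then $v_m$ has the same rank in $e_a$ and $e_b$; and $\chi(e_a)_1=\psi(W_{\{1,\max(a,2)\}})$, $\chi(e_a)_r=\psi(W_{\{r+1,\min(a,r)\}})$) and the enumeration of the failure configurations at $e_1$ and $e_{r+1}$, and it all holds. What your version buys is that it avoids Theorem~\ref{theorem_totalorders} entirely and the exponent blow-up per inductive step is the constant $r$ rather than $O(\log\log n)$, so the cumulative exponent is $r!/6$ rather than $(\log\log n)^{r-3}$; both are absorbed into the $o(1)$ so the final bound is the same. What the paper's version buys is conceptual simplicity: the ``delete the maximum in each order'' device dispenses with the endpoint case analysis, which, as you note, is the genuinely delicate part of your argument.
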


In addition to proving a family of special cases of Conjecture~\ref{conjecture_EGy}, Theorem~\ref{theorem_EGygeneral} is also directly related to the problem of Karchmer and Wigderson (Problem~\ref{prob:Wig}): in Subsection~\ref{subsection_connectiontoWig} we briefly describe how Theorem~\ref{theorem_EGygeneral} implies that a natural approach to finding polynomial lower bounds for Problem~\ref{prob:Wig} cannot work.\medskip

The rest of the paper is organized as follows. In Section \ref{sec:upper}, we prove Theorem \ref{thm:P3} and the ``only if" part of Theorem \ref{thm:characterize}, that is, a subpolynomial upper bound for $g(n,H)$ whenever $H$ has at most $3$ edges or has precisely $4$ edges and contains a triangle. In Section \ref{sec:lower}, we prove Theorem \ref{thm:cliques} and the ``if" part of Theorem \ref{thm:characterize}. In Section~\ref{sec:EGy} we prove Theorem~\ref{theorem_EGygeneral}. We finish the paper by giving some brief concluding remarks in Section \ref{sec:concluding}.

\section{Upper bounds} \label{sec:upper}
In this section we give constructions providing subpolynomial upper bounds for $g(n,H)$ when $H$ is one of the following graphs:
\begin{itemize}
    \item $P_3$, the path with $3$ edges;
    \item $T_p$, a triangle with a pendant edge; or
    \item $T_e$, the disjoint union of a triangle and an edge.
\end{itemize}


Note that if $H'$ is formed from $H$ by adding some isolated vertices, then any collection of colourings which is $(n,H)$-local is also $(n,H')$-local, hence $g(n,H')\leq g(n,H)$. So subpolynomiality for the $3$ graphs above, together with Theorem~\ref{thm:bounded}, deals with all cases of the ``only if'' part of Theorem~\ref{thm:characterize}.
Furthermore, since $P_3$ is a subgraph of $T_p$ (on the same vertex set), the result that $g(n,T_p)$ is subpolynomial easily implies that $g(n,P_3)$ is subpolynomial. Nevertheless, we will first focus on the proof for $P_3$, as it is slightly simpler and motivates the construction for $T_p$ (and also interesting on its own due to its connection to Problem~\ref{prob:Wig}).\medskip

We will pick our colourings in such a way that in each colouring $f_v$, the edges which contain $v$ receive different colours from the ones that do not contain $v$. Note that if this property holds, then the only way a $P_3$ $abcd$ can be non-rainbow (i.e., have some colour appearing more than once) for each colouring corresponding to its vertices if $$f_a(bc)=f_a(cd), f_b(ab)=f_b(bc), f_c(bc)=f_c(cd)\textnormal{ and }f_d(ab)=f_d(bc).$$

It is not difficult to prove that for any collection of colourings with $n^{o(1)}$ colours we can find a $P_3$ in which the first three of the four equalities above hold, i.e., the colourings $f_a,f_b$ and $f_c$ of the $P_3$ are all non-rainbow. However, we will show that we can construct a collection of colourings with $n^{o(1)}$ colours such that any $P_3$ $abcd$ is rainbow in either $f_a$ or $f_d$.

The main idea is the following. Assume that the colourings $f_v$ are defined in such a way that any edge $\{x,y\}$ not containing $v$ is coloured by the colour $\gamma(\{v,x,y\})$, where $\gamma$ is some colouring of the edges of the complete $3$-uniform hypergraph formed by our $n$ vertices. Then the condition $f_a(bc)=f_a(cd)$ becomes $\gamma(abc)=\gamma(acd)$, and the condition $f_d(ab)=f_d(bc)$ becomes $\gamma(abd)=\gamma(bcd)$. Thus, if both of these conditions hold, then $\gamma$ uses at most $2$ colours on the $4$ vertices $a,b,c,d$.
However, recall from Subsection~\ref{subsec:IntroEGy} the following result of Conlon, Fox, Lee and Sudakov~\cite{conlon2015grid} about the Erdős--Gyárfás problem.


\begin{theorem}[Conlon, Fox, Lee and Sudakov~\cite{conlon2015grid}]\label{thm:hypergraphEGy}
    The edges of the complete $3$-uniform hypergraph $K_n^{(3)}$ on $n$ vertices can be coloured using $e^{(\log n)^{2/5+o(1)}}=n^{o(1)}$ colours in such a way that at least $3$ different colours appear among the four edges spanned by any $4$ vertices.
\end{theorem}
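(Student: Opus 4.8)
The first thing I would point out is that Theorem~\ref{thm:hypergraphEGy} is not really a new statement but a reformulation of Theorem~\ref{theorem_EGy43}. Any four vertices of $K_n^{(3)}$ span exactly $\binom{4}{3}=4$ edges, so an edge-colouring of $K_n^{(3)}$ in which at least three colours appear among the edges spanned by every four vertices is precisely a $(4,3)$-colouring in the sense of the Erdős--Gyárfás function; the assertion is therefore exactly the bound $f_3(n,4,3)\leq e^{(\log n)^{2/5+o(1)}}=n^{o(1)}$. So my ``proof'' is to invoke Theorem~\ref{theorem_EGy43} directly, and nothing further is needed.

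For completeness, let me sketch the shape of the construction of Conlon, Fox, Lee and Sudakov that lies behind Theorem~\ref{theorem_EGy43}. The plan there is a recursive, Mubayi-style colouring. One identifies the vertex set with the strings of some fixed length $t$ over an alphabet $[m]$, so that $n=m^{t}$, and partitions the $t$ coordinates into a bounded number of consecutive blocks. Given a triple $\{x,y,z\}$, one locates the first block in which the three strings do not all coincide, and then assigns to the triple a colour recording: the index of that block; the ``shape'' of the triple inside that block (whether all three restrictions are distinct there, or exactly two agree and, if so, which pair); a colour obtained by applying a good $(4,3)$-colouring recursively to the restrictions to that block when all three differ there (and a Mubayi-style colouring of the relevant pair when two agree); together with a bounded amount of auxiliary data describing the behaviour of the strings before that block, which is what is ultimately needed to pull apart the four edges of a hypothetical bad quadruple.

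To check the $(4,3)$ property one would take four vertices, assume for contradiction that their four triples use at most two colours, and walk to the first block in which the four strings are not all equal: either the recorded first-disagreement indices already separate three of the triples, or --- when the triples agree on that index --- the recorded shapes together with the recursively assigned sub-colours cannot all collapse to two values, because a $(4,3)$- (respectively $(2,2)$-type) colouring applied inside the block forbids exactly the degenerate pattern that would be required. The number of colours obeys a recursion roughly of the form $C(m^{t})\leq m^{O(1)}\,C(m^{t/s})$ plus a Mubayi-type contribution, and unwinding this over the $O(\log t)$ recursion levels and optimising the alphabet size and the number of blocks produces the exponent $2/5$. The main obstacle --- and the reason the exponent is $2/5$ and not smaller --- is the ``same first-disagreement block'' case: three vertices can interact inside a block in several inequivalent ways, so one has to design the auxiliary bookkeeping to be simultaneously rich enough to break every two-colour pattern on a quadruple and cheap enough to cost only a subpolynomial number of colours overall, and it is this trade-off that fixes the block sizes and hence the final bound.
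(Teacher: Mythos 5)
Your observation is exactly right and matches the paper: Theorem~\ref{thm:hypergraphEGy} is a verbatim restatement of the cited Theorem~\ref{theorem_EGy43}, since a colouring of $K_n^{(3)}$ in which any $4$ vertices span at least $3$ colours is by definition a $(4,3)$-colouring, so the claim is precisely $f_3(n,4,3)\leq e^{(\log n)^{2/5+o(1)}}=n^{o(1)}$, and the paper presents it as an external result of Conlon, Fox, Lee and Sudakov with no independent proof. The sketch you append of the underlying recursive construction is not part of the paper's argument and is unnecessary for the purpose at hand.
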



We are ready to prove that $g(n,P_3)$ is subpolynomial, which follows easily from the discussion above.

\begin{proof}[Proof of Theorem~\ref{thm:P3}]
    Using Theorem~\ref{thm:hypergraphEGy}, we can take a colouring $\gamma$ of the complete $3$-uniform hypergraph formed on our $n$ vertices such that $\gamma$ uses at most $e^{(\log n)^{2/5+o(1)}}$ colours and at least $3$ colours appear among any $4$ vertices. We now define our collection of colourings as follows. Let $z_0$ be a colour not used by $\gamma$ (i.e., $z_0\not \in \operatorname{Im}(\gamma)$). Define, for any vertex $v$ and edge $e=\{x,y\}$,
    \begin{equation*}
        f_v(e)=
        \begin{cases}
        z_0 & \textnormal{if $v\in e$}\\
        \gamma(e\cup\{v\}) & \textnormal{if $v\not \in e$}.
        \end{cases}
    \end{equation*}

Let $abcd$ be any $P_3$ in our $K_n$. As noted in the discussion above, if the edges of this $P_3$ do not all receive different colours in $f_a$, then $\gamma(abc)=\gamma(acd)$. Similarly, if the edges of the $P_3$ do not all receive different colours in $f_d$, then $\gamma(abd)=\gamma(bcd)$. But we know that at least $3$ different colours appear among $\gamma(abc),\gamma(abd),\gamma(acd)$ and $\gamma(bcd)$, so $\gamma(abc)=\gamma(acd)$ and $\gamma(abd)=\gamma(bcd)$ cannot simultaneously hold. Hence our collection of colourings is $(n,P_3)$-local with $1+e^{(\log n)^{2/5+o(1)}}=n^{o(1)}$ colours.
\end{proof}

We now turn to the proof that $g(n,T_p)$ is subpolynomial, where $T_p$ is the triangle with a pendant edge. As noted before, this result is stronger than Theorem~\ref{thm:P3}, and correspondingly the proof will be a refinement of the one above.
\begin{theorem}\label{thm:pendant}
    We have $g(n,T_p)=n^{o(1)}$.
\end{theorem}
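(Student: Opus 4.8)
The plan is to refine the construction used for $P_3$. Recall that $T_p$ consists of a triangle $bcd$ together with a pendant edge $ab$ (so $a$ has degree $1$, $b$ has degree $3$, and $c,d$ have degree $2$). We again want colourings $f_v$ for which edges through $v$ get a special colour $z_0$ and edges not through $v$ are coloured using a hypergraph colouring together with possibly some extra data. The key point is that a copy of $T_p$ on vertices $a,b,c,d$ (with the roles as above) fails to be rainbow in $f_a$ only if the four edges of $T_p$ that avoid $a$ — namely $bc,bd,cd$ — do not all get distinct colours, which in the $\gamma$-based scheme means two of $\gamma(abc),\gamma(abd),\gamma(acd)$ coincide. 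For the other three vertices the pendant edge $ab$ passes through $b$, so it gets colour $z_0$ in $f_b$, while the triangle edges avoid $b$; thus $f_b$ fails to be rainbow only if two of $\gamma(abd),\gamma(acd)$ coincide — but wait, here all three triangle edges $bc,bd,cd$ avoid $b$, so failure means two of $\gamma(abc),\gamma(abd),\gamma(acd),\gamma(bcd)$ restricted to the three triples containing $b$... I need to be careful: in $f_b$ the edges of $T_p$ are $ab$ (colour $z_0$) and $bc,bd,cd$; edges $bc,bd$ contain $b$ so also get $z_0$, a clash already. So in fact $f_b$ is essentially never rainbow, because $b$ has degree $3$ in $T_p$ and two edges through $b$ collide at colour $z_0$.

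This means the $z_0$ trick alone is too crude: we cannot rely on $f_b$, $f_c$ or $f_d$ via the vertex-incident edges, and for $c,d$ (degree $2$) the two incident edges of the triangle also collide at $z_0$. So the only vertex that can rescue a given copy of $T_p$ is $a$, the degree-$1$ vertex. Hence I would instead design $f_v$ so that edges through $v$ are coloured not all by one colour $z_0$ but in a way that is itself rainbow-friendly — for instance, $f_v(\{v,x\})$ could encode $x$ via a secondary colouring, while edges avoiding $v$ are coloured $\gamma(e\cup\{v\})$ as before. The natural split: give the ``star at $v$'' its own palette disjoint from $\mathrm{Im}(\gamma)$, and colour $f_v(\{v,x\})$ by the value $\delta(x)$ of some auxiliary colouring $\delta\colon V\to [m]$ with $m = n^{o(1)}$. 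But a single $\delta$ won't separate all pairs; instead we want, for the copy $abcd$, that in $f_b$ the edges $ba,bc,bd$ get three distinct colours and $cd$ gets a fourth — the first requires $\delta$ (or $\delta_b$, a colouring depending on $b$) to be injective on $\{a,c,d\}$, which cannot hold for all triples if $m$ is bounded, but CAN hold ``locally enough'' if we use a colouring of triples: set $f_b(\{b,x\}) = $ something like $\gamma'(\{b,x,?\})$ — except an edge only has two endpoints. The honest fix is to let $f_v$ on its star be governed by a proper edge-colouring type object: choose $f_v(\{v,x\})$ to depend on both $v$ and $x$ through a colouring $\eta$ of pairs, i.e. $f_v(\{v,x\}) = \eta(\{v,x\})$ where $\eta$ is an ordinary edge-colouring of $K_n$ with $n^{o(1)}$ colours such that ... but ordinary edge colourings of $K_n$ with few colours have huge monochromatic stars. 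So this is the main obstacle.

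The resolution I would pursue: handle $T_p$ by pushing to a colouring of $4$-uniform hypergraph edges or, better, by combining two instances of the Erdős--Gyárfás theorem. Concretely, apply Theorem~\ref{thm:hypergraphEGy} twice, with two colourings $\gamma_1,\gamma_2$ of $K_n^{(3)}$, and set $f_v(e)$, for $v\notin e=\{x,y\}$, to be the pair $(\gamma_1(e\cup\{v\}),\gamma_2(e\cup\{v\}))$ — no, that doesn't add power. Instead the right object is likely a colouring $\gamma$ of $K_n^{(4)}$: define $f_v(\{x,y\})$ for $v\notin\{x,y\}$ to record, together with $\gamma(\{v,x,y\})$, also enough to detect triangles. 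Observe that a copy of $T_p$ lives on $4$ vertices $\{a,b,c,d\}$, and the condition ``$T_p$ is non-rainbow in every $f_u$'' translates, after the $z_0$-on-the-star convention (which kills $f_b,f_c,f_d$), into ``$f_a$ is non-rainbow'', i.e. two of $\gamma(abc),\gamma(abd),\gamma(acd)$ agree. That is a condition on a single $4$-set and is NOT forbidden by the $4$-vertex Erdős--Gyárfás property (which only says $\geq 3$ colours among all four triples). So we genuinely need more: we must also make $f_b$ (or $f_c$, $f_d$) useful, which forces us to abandon the uniform $z_0$ on stars.

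So the plan, in order: (1) Set up colourings $f_v$ where, for $v\notin e$, $f_v(e)=\gamma(e\cup\{v\})$ with $\gamma$ from Theorem~\ref{thm:hypergraphEGy}, and for $v\in e=\{v,x\}$, set $f_v(e)=(z_0, \sigma(x))$ where $\sigma$ is an auxiliary colouring — actually, the cleanest realization is to take $\sigma$ to be the identity-like data coming from a second Erdős--Gyárfás colouring $\gamma'$ of $K_n^{(3)}$ evaluated on $\{v,x\}\cup\{w\}$... but an edge lacks a third vertex. (2) Therefore, instead colour the star at $v$ by $f_v(\{v,x\}) = \gamma(\{v,x,\phi(v)\})$ for a fixed ``anchor'' map or, even simpler, observe that in a copy $abcd$ of $T_p$, the pendant vertex $a$ together with the triangle $bcd$ means the star edges at $c$ inside $T_p$ are $cb,cd,ca$ — three edges, so we need $f_c$ rainbow on $\{cb,cd,ca\}$ plus $bd$; colouring $f_c(\{c,x\})=\gamma(\{c,x,?\})$ still needs a third coordinate. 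The real structural input must be: a copy of $T_p$ determines the pendant vertex $a$ canonically (it is the unique degree-$1$ vertex), so the triple $\{b,c,d\}$ together with $a$ form an ordered structure, and we should colour $f_a$ using $\gamma$ on triples containing $a$, while colouring $f_b, f_c, f_d$ using a SEPARATE hypergraph colouring $\gamma^\star$ of $K_n^{(4)}$ — e.g. $f_b(e) = \gamma^\star(e \cup \{b\} \cup \{?\})$... and here $e\cup\{b\}$ already has $3$ or $4$ vertices. If $e=\{c,d\}$ (edge of the triangle not through $b$), then $e\cup\{b\}=\{b,c,d\}$, a triple; if $e=\{a,?\}$ it is through $a\notin\{b\}$. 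This is getting circular; I will be honest that the heart of the proof, and the main obstacle, is finding the right auxiliary colouring so that for EACH of the four vertices of a potential bad $T_p$ the relevant collision is controlled, and then checking that a simultaneous bad configuration would violate either Theorem~\ref{thm:hypergraphEGy} applied to $\{a,b,c,d\}$ or a second application of it (possibly to a $5$-set, or a cleverly chosen $4$-set), yielding a contradiction. I expect the final construction to use $\gamma$ (and perhaps a bounded number of further $n^{o(1)}$-colourings derived from Theorem~\ref{thm:hypergraphEGy}), and the verification to reduce, as in the $P_3$ proof, to the statement that $\gamma$ cannot simultaneously have two prescribed pairs of its triple-values equal on a $4$-set — exactly the Erdős--Gyárfás property — after the star-colouring has been chosen to reduce the number of ``bad vertices'' from $4$ down to $2$. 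The main obstacle is precisely this reduction: designing the star colouring $f_v|_{\text{star}(v)}$ with $n^{o(1)}$ colours so that a non-rainbow $f_v$ forces a $\gamma$-collision, despite the fact that $v$ has degree up to $3$ in $T_p$ so the incident edges must themselves be given distinct colours — which I would achieve by colouring the star at $v$ with a colouring $\gamma$-reminiscent object, namely using Theorem~\ref{thm:hypergraphEGy} again on the set $V\setminus\{v\}$ viewed with $v$ adjoined, formally $f_v(\{v,x\}) := \gamma(\{v, x, \psi(x)\})$ for a suitable fixed involution-free map $\psi$, or more robustly by a direct probabilistic/explicit argument that a sparse edge colouring of $K_n$ exists with no ``rainbow-blocking'' triple — and then combining everything.
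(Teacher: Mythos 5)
Your proposal correctly identifies the key obstacle, but does not resolve it, so there is a genuine gap.

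You are right that the uniform colour $z_0$ on the star at $v$ is too crude for $T_p$: since $b$ has degree $3$ and $c,d$ have degree $2$ in $T_p$, the colourings $f_b,f_c,f_d$ are automatically non-rainbow, and the condition ``two of $\gamma(abc),\gamma(abd),\gamma(acd)$ coincide'' is not forbidden by the Erd\H{o}s--Gy\'arf\'as property on $\{a,b,c,d\}$. So a richer colouring of the star is needed. But after diagnosing this you cycle through several ideas that do not land, and end with a vague gesture at ``a sparse edge colouring of $K_n$ with no rainbow-blocking triple'' without saying what property it needs or how to get it.

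The missing idea is much simpler than anything you tried. The paper colours the star at $v$ by an ordinary edge-colouring $\delta$ of $K_n$ with $O(\log n)$ colours and \emph{no monochromatic triangle} (e.g.\ label vertices by $\{0,1\}^m$ and colour $xy$ by the first coordinate where they differ), and takes $\mathrm{Im}(\gamma)$ and $\mathrm{Im}(\delta)$ disjoint; so $f_v(e)=\delta(e)$ if $v\in e$ and $f_v(e)=\gamma(e\cup\{v\})$ otherwise. The point is that the star colouring does \emph{not} need to separate all pairs of star edges in general; you only analyze three of the four colourings, and for each the obstruction splits into a $\delta$-part and a $\gamma$-part. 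Concretely, with triangle $bcd$ and pendant $a$ joined to $b$: $f_c$ non-rainbow forces $\delta(bc)=\delta(cd)$ or $\gamma(abc)=\gamma(bcd)$; $f_d$ non-rainbow forces $\delta(bd)=\delta(cd)$ or $\gamma(abd)=\gamma(bcd)$. The two $\delta$-clauses cannot both hold, since that would make $bcd$ monochromatic under $\delta$. So WLOG $\gamma(abc)=\gamma(bcd)$, and then $f_a$ non-rainbow gives a second coincidence among $\gamma(abc),\gamma(abd),\gamma(acd)$, forcing at most two colours among the four triples on $\{a,b,c,d\}$, contradicting the choice of $\gamma$. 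Note this uses $f_a,f_c,f_d$ only; $f_b$ is never used, so there is no need to make $f_b$ rainbow-friendly on the three star edges at $b$ — a requirement you correctly worried would be unachievable with few colours. Your proposal never reaches this reduction, so it does not constitute a proof.
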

\begin{proof}
    As before, take an edge-colouring $\gamma$ of the $3$-uniform complete hypergraph $K_n^{(3)}$ using $n^{o(1)}$ colours such that among any $4$ vertices at least $3$ colours appear. Furthermore, take an edge-colouring $\delta$ of the clique $K_n$ using $O(\log n)$ colours such that there is no monochromatic triangle. (It is well-known that this is possible -- for example, label the vertices by elements of $\{0,1\}^m$ and colour the edge $xy$ by the minimal $i$ for which $x_i\not =y_i$.) We may assume that $\gamma$ and $\delta$ have disjoint images.
    
    We define our collection of colourings $f_v$ by setting

        \begin{equation*}
        f_v(e)=
        \begin{cases}
        \delta(e) & \textnormal{if $v\in e$}\\
        \gamma(e\cup\{v\}) & \textnormal{if $v\not \in e$}.
        \end{cases}
    \end{equation*}

Take any copy $abcd$ of $T_p$: the vertices $b,c,d$ form a triangle and $a$ is joined to $b$. We show that this copy must be rainbow in one of $f_a$, $f_c$ or $f_d$.

If this copy of $T_p$ is not rainbow under $f_c$, then we must have either $\delta(bc)=\delta(cd)$ or $\gamma(abc)=\gamma(bcd)$. Similarly, if the copy is not rainbow under $f_d$, then either $\delta(bd)=\delta(cd)$ or $\gamma(abd)=\gamma(bcd)$. Note that, by the definition of $\delta$, we cannot have both $\delta(bc)=\delta(cd)$ and $\delta(bd)=\delta(cd)$. Thus, without loss of generality, we have $\gamma(abc)=\gamma(bcd)$.

But if our copy of $T_p$ is not rainbow under $f_a$, then at least two of $abc,abd,acd$ have the same colour in $\gamma$. Together with $\gamma(abc)=\gamma(bcd)$, this would imply that at most two colours appear among $a,b,c,d$ in $\gamma$, giving a contradiction. So our collection of colourings is $(n,T_p)$-local with $n^{o(1)}+O(\log{n})=n^{o(1)}$ colours.
\end{proof}

Finally, we prove that $g(n,T_e)$ is subpolynomial, where $T_e$ denotes the disjoint union of a triangle and an edge. In fact, we will prove a logarithmic bound here.

\begin{theorem}\label{thm:triangle+edge}
    We have $g(n,T_e)=O(\log n)$.
\end{theorem}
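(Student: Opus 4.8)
The plan is to use essentially the same idea as in the proof of Theorem~\ref{thm:pendant}, but now the second ingredient only needs to rule out a monochromatic triangle on the far side of the $T_e$, which can be done with a logarithmic palette and no hypergraph colouring at all. Concretely, let $\delta$ be an edge-colouring of $K_n$ using $O(\log n)$ colours with no monochromatic triangle (e.g. the standard $\{0,1\}^m$ colouring used in the proof of Theorem~\ref{thm:pendant}). I would define, for each vertex $v$ and each edge $e$, the colouring $f_v(e) = \delta(e)$ if $v \in e$, and $f_v(e)$ equal to some fixed colour encoding ``$e$ is far from $v$'' together with $\delta(e)$ if $v \notin e$ — more carefully, one wants $f_v$ to distinguish edges at $v$ from edges not at $v$, so set $f_v(e) = (0,\delta(e))$ if $v \in e$ and $f_v(e) = (1,\delta(e))$ if $v \notin e$; this doubles the number of colours, keeping it $O(\log n)$.

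Now take any copy of $T_e$ in $K_n$, say with triangle on $\{x,y,z\}$ and a disjoint edge $\{a,b\}$; the four vertices of the copy are $a,b$ and two of $x,y,z$, say $x,y$. I would show the copy is rainbow in $f_x$ (or symmetrically $f_y$). Under $f_x$, the edge $xy$ gets colour $(0,\delta(xy))$ while the edges $yz$, $xz$ get colours $(0,\delta(yz))$, $(0,\delta(xz))$, and the edge $ab$ gets colour $(1,\delta(ab))$. The edge $ab$ is automatically distinct from the three triangle edges because of the first coordinate. Among $xy$, $yz$, $xz$: if two of these had the same colour under $f_x$, then two sides of the triangle $\{x,y,z\}$ would have the same $\delta$-value — but that still doesn't immediately give a monochromatic triangle, so the argument needs the symmetry over the vertex chosen. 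The cleaner route: the copy of $T_e$ fails to be rainbow under \emph{every} $f_v$, $v$ a vertex of the copy, only if for each such $v$ some colour repeats. For $v\in\{a,b\}$, since $v$ lies on the edge $ab$ and not on the triangle, the only possible repetition under $f_v$ is among the three triangle edges, forcing $\delta$ to repeat on two sides of the triangle $xyz$; and similarly for $v\in\{x,y\}$, a repetition under $f_v$ forces either $\delta$ to repeat among the triangle sides again, or $\delta(ab)=\delta$ of one of the two triangle edges incident to $v$ (since under $f_x$ the edge $ab$ carries first coordinate $1$, it is actually never equal to a triangle edge, so this case is vacuous). Hence failure on all four vertices forces $\delta$ to repeat on two sides of the triangle, say $\delta(xy)=\delta(xz)$. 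This is still not a monochromatic triangle. To close the gap I would instead make $\delta$ rainbow on triangles: use a proper edge-colouring? No — the right move is to take $\delta$ so that among the three edges of \emph{any} triangle at least two colours appear (i.e. no monochromatic triangle), but additionally exploit that the $T_e$ copy has $\binom{3}{2}=3$ choices of which vertex of the triangle to include in the four-vertex copy $a,b,x,y$; over those three copies, failing on all of them forces $\delta$ to be constant on all three sides of $xyz$, contradicting the no-monochromatic-triangle property.

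So the final argument is: suppose for contradiction that no copy of $T_e$ on $\{a,b\}\sqcup\{x,y,z\}$ is rainbow under any of its four vertex-colourings. Applying this to the three copies obtained by including $x$, $y$, or $z$ respectively as the fourth vertex, and using that under $f_v$ with $v\in\{a,b\}$ the only possible colour clash is between two triangle edges (the edge $ab$ is separated by the first coordinate), we deduce for each of the three triangle edges that it shares a $\delta$-colour with another triangle edge; chasing these equalities yields $\delta(xy)=\delta(xz)=\delta(yz)$, a monochromatic triangle under $\delta$, contradiction. The main obstacle — and the only subtle point — is precisely this bookkeeping: ensuring the first-coordinate trick really does prevent the edge $ab$ from ever clashing with a triangle edge, so that all clashes are internal to the triangle, and then verifying that the three choices of fourth vertex collectively pin down all three triangle sides to one colour. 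The bound $O(\log n)$ is immediate since $\delta$ uses $O(\log n)$ colours and we only doubled the palette.
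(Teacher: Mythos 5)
There is a genuine gap, on two levels. First, a copy of $T_e$ (the disjoint union of a triangle and an edge) has \emph{five} vertices, not four: if the triangle is on $\{x,y,z\}$ and the disjoint edge is $\{a,b\}$, then $V(T)=\{a,b,x,y,z\}$ and the local-rainbow condition asks for a single $u$ among these five for which all four edges get distinct colours in $f_u$. Your framework of ``three copies obtained by including $x$, $y$, or $z$ as the fourth vertex'' does not correspond to anything in the problem -- there are no four-vertex copies of $T_e$ to vary over, and the adversary only needs the one genuine copy to fail for all five of its vertices. So the step where you aggregate information across three copies to force $\delta(xy)=\delta(xz)=\delta(yz)$ has no valid interpretation.

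Second, and more fundamentally, the colouring $f_v(e)=(\mathbf{1}[v\in e],\delta(e))$ cannot work, because for $v\notin e$ it depends only on $e$ and not on the position of $v$. Work it out for the five vertices: under $f_a$ and $f_b$ the three triangle edges all carry first coordinate $1$, so rainbowness there requires $\delta$ to be rainbow on the triangle $xyz$ -- which the first-differing-coordinate colouring \emph{never} achieves (exactly two of the three values always coincide). Under $f_z$ (where $z$ is the apex with $\delta(xz)=\delta(yz)$) the two incident triangle edges clash. Under $f_x$ the only possible clash is $\delta(yz)=\delta(ab)$, and under $f_y$ it is $\delta(xz)=\delta(ab)$; since $\delta(xz)=\delta(yz)$, choosing $a,b$ with $\delta(ab)$ equal to that common value kills all five colourings simultaneously with no monochromatic triangle in sight. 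The paper's proof avoids this by making $f_v(xy)$ for $v\notin\{x,y\}$ equal to $\max\{\delta(vx),\delta(vy)\}$ -- a quantity that genuinely depends on $v$ -- and then exploits the ultrametric-type property that $\delta(ax)=\delta(ab)$ forces $\delta(bx)>\delta(ab)$, so that a clash seen from one triangle vertex $a$ pushes the colour of the far edge strictly above everything $b$ sees on the triangle. That $v$-dependence is the missing idea; some such mechanism is unavoidable, since any colouring in which $f_v(e)$ is a function of $(\mathbf{1}[v\in e],\delta(e))$ alone is defeated by the example above.
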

\begin{proof}
    We may assume that the vertices of our clique $K_n$ are elements of $\{0,1\}^m$, where $m=\lceil\log_2 n\rceil$. Given vertices $x$ and $y$, let $\delta(xy)=\min\{i:x_i\not=y_i\}$ be the first coordinate where $x$ and $y$ differ. Observe that $\delta$ has the property that for any three vertices $x,y,z$, exactly two of $\delta(xy),\delta(xz),\delta(yz)$ are equal. Moreover, if $\delta(xz)=\delta(yz)$ then $\delta(xy)>\delta(xz)=\delta(yz)$.

    Define our collection of colourings $f_v$ as follows. For any edge $xy$, let         \begin{equation*}
        f_v(xy)=
        \begin{cases}
        -\delta(xy) & \textnormal{if $v\in \{x,y\}$}\\
        \max\{\delta(vx),\delta(vy)\}& \textnormal{if $v\not \in \{x,y\}$}.
        \end{cases}
    \end{equation*}
    (The only purpose of the minus sign in the first case is to ensure that $f_v$ takes different values on edges that contain $v$ and on edges that do not.)
Consider any copy of $T_e$ formed by a triangle $abc$ and an edge $xy$; we show that this copy is rainbow under one of $f_a, f_b$ or $f_c$. Without loss of generality, we may assume that $\delta(ab)>\delta(ac)=\delta(bc)$.

Note that $f_a(ab)\not=f_a(ac)$. So if this copy of $T_e$ is non-rainbow under $f_a$, then we must have $f_a(xy)=f_a(bc)$, i.e., $\max\{\delta(ax),\delta(ay)\}=\max\{\delta(ab),\delta(ac)\}=\delta(ab)$. Without loss of generality, we have $\delta(ab)=\delta(ax)$. But then $\delta(bx)>\delta(ab)$ and hence $\max\{\delta(bx),\delta(by)\}>\delta(ab)$. This implies that $f_b(ac)\not =f_b(xy)$. As $f_b(ab)\not=f_b(bc)$, our copy of $T_e$ must be rainbow under $f_b$, finishing the proof.
\end{proof}

\section{Lower bounds} \label{sec:lower}

\subsection{The proof of Theorem \ref{thm:cliques}}

In this subsection, we prove Theorem \ref{thm:cliques}. The proof uses the following lemma of the second author {\cite[Theorem 3.1]{Jan23}}, which is a significant generalization of the Bondy--Simonovits theorem \cite{BS74}.

\begin{lemma} \label{lem:good cycles}
    Let $\ell \geq 2$ and $s$ be positive integers. Then there exists a constant $C = C(\ell, s)$ with the following property. Suppose that $G = (V, E)$ is a graph with $N$ vertices and at least $CN^{1+1/\ell}$ edges. Let $\sim$ be a symmetric binary relation on $V$ such that for every $u \in V$ and $v \in V$, $v$ has at most $s$ neighbours $w \in V$ which satisfy $u \sim w$. Then $G$ contains a $2\ell$-cycle $x_1x_2 \dots x_{2\ell}$ such that $x_i \not \sim x_j$ for every $i \neq j$.
\end{lemma}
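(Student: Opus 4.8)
The plan is to reprove the Bondy--Simonovits theorem in a form robust enough that one can additionally demand $\sim$-independence of the cycle, threading the new constraint through the argument and checking that it costs only a bounded amount in every degree estimate involved. Throughout we take $C=C(\ell,s)$ as large as convenient; the statement is vacuous once $CN^{1+1/\ell}>\binom N2$, so small $N$ need not be considered.

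I would begin by recording what the hypothesis on $\sim$ buys us. For a vertex $u$ write $N_\sim(u)=\{w:u\sim w\}$; the assumption says exactly that $|N_G(v)\cap N_\sim(u)|\leq s$ for all $u,v\in V$. Hence, if during a construction we have already committed to vertices $x_1,\dots,x_j$ with $j\leq 2\ell$ and now wish to extend from some current vertex $x$ to a new vertex $y\in N_G(x)$ with $y\notin\{x_1,\dots,x_j\}$ and $y\not\sim x_i$ for every $i$, then the vertices $y\in N_G(x)$ that fail one of these conditions number at most $j+\sum_{i\leq j}|N_G(x)\cap N_\sim(x_i)|\leq 2\ell+2\ell s$, a constant depending only on $\ell$ and $s$. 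This uses the codegree bound for \emph{all} pairs $x,x_i$, not merely $\sim$-related ones, and that is exactly what will also let us forbid $\sim$-relations between non-consecutive cycle vertices.

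Next comes the standard cleaning: repeatedly delete vertices of degree below $d:=\tfrac C2 N^{1/\ell}$; this removes fewer than $dN\leq\tfrac12 e(G)$ edges, so a nonempty subgraph $G'$ survives, on $N'\leq N$ vertices and with minimum degree at least $d\geq\tfrac C2 (N')^{1/\ell}$, and the hypothesis on $\sim$ is inherited by $V(G')$, since it only becomes easier to satisfy after deleting vertices and edges. Inside $G'$ I would then run the breadth-first-search proof of Bondy--Simonovits, which builds a $2\ell$-cycle out of a bounded number of short paths constrained to run between prescribed levels of a BFS tree. Assuming this is carried out in the robust form described in the next paragraph --- in which every step that selects a vertex of the eventual cycle retains a candidate set of size $\Omega(d)$ --- the modification is simply to discard at each such step, in addition to whatever the original argument discards, the at most $2\ell+2\ell s$ vertices flagged above. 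This is harmless: the candidate sets have size at least a positive constant (depending on $\ell$) times $d\geq\tfrac C2$, which exceeds $2\ell+2\ell s$ once $C$ is large in terms of $\ell$ and $s$. Since $\sim$ is symmetric and we enforce $x_i\not\sim x_j$ at the step where the later of $x_i,x_j$ is chosen, the $2\ell$-cycle produced is $\sim$-independent.

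The crux is therefore the robust reformulation of Bondy--Simonovits promised above. The difficulty is that the cycle in the usual argument is assembled partly from \emph{forced} BFS-tree paths (each vertex has a unique parent), so ``many valid continuations at each step'' is not automatic: a vertex extending into the next level towards the root may, a priori, have only one neighbour there. I expect the real work to be in rerouting around this --- for instance by replacing the tree-path components with paths found greedily inside regions where every extension provably has $\Omega(d)$ continuations (near-regular bipartite graphs between consecutive BFS levels, or a fan of internally disjoint root-to-level paths obtained by a Menger-type argument), and then tuning lengths so that the resulting cycle has length exactly $2\ell$ rather than some shorter even length. Once such a statement is available, feeding it the forbidden sets $N_\sim(x_1),\dots,N_\sim(x_j)$ as in the second paragraph yields the lemma; the remaining ingredients (the cleaning, and the passage to a near-regular bipartite layer) are plain subgraph operations and leave the $\sim$-hypothesis intact.
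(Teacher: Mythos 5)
First, a point of comparison: the paper does not prove this lemma at all --- it is quoted verbatim as Theorem 3.1 of the second author's earlier paper \cite{Jan23} --- so there is no internal proof to measure your argument against; it must stand on its own. Judged that way, your write-up is a plan rather than a proof, and the gap sits exactly where you yourself place it. The preliminary steps are fine: the hypothesis on $\sim$ does say $|N_G(v)\cap N_\sim(u)|\leq s$ for all $u,v$, so at any stage of a vertex-by-vertex construction at most $2\ell+2\ell s$ neighbours of the current vertex are forbidden, and the cleaning to minimum degree $\tfrac{C}{2}N^{1/\ell}$ is standard. But the entire content of the lemma is then delegated to a ``robust reformulation of Bondy--Simonovits'' in which every vertex of the $2\ell$-cycle is drawn from a candidate set of size $\Omega(d)$, and you explicitly do not establish this (``I expect the real work to be in rerouting around this\dots'', ``Once such a statement is available\dots''). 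That statement is not a cosmetic strengthening of Bondy--Simonovits: the cycle in the classical proof is not built greedily (a greedily grown path of length $2\ell-1$ has no reason to close up, and the BFS-tree edges give exactly one choice, as you note), and no standard form of the theorem offers $\Omega(d)$ choices per cycle vertex, nor even a bounded number of alternatives at each position. Your proposed remedies --- near-regular bipartite layers, Menger-type fans, ``tuning lengths so that the cycle has length exactly $2\ell$'' --- name the difficulties without resolving them; getting the length to be exactly $2\ell$ (rather than some shorter even length) is itself a known sticking point in such arguments.

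In short, the black box you invoke is essentially equivalent to the lemma itself, which is why it occupies a separate paper: the argument in \cite{Jan23} does not robustify the BFS proof but proceeds by a different, counting-based route (producing many homomorphic copies of $C_{2\ell}$ that are well spread, so that both the degenerate copies and those containing a $\sim$-related pair can be discarded). Your reduction of the $\sim$-constraint to that hypothetical robust theorem is sound as far as it goes --- the bounded forbidden sets would indeed be absorbed --- but the theorem is assumed, not proved, so the proposal does not constitute a proof of the lemma.
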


\begin{proof}[Proof of Theorem \ref{thm:cliques}]
    Let $C=C(\ell,1)$ be provided by Lemma \ref{lem:good cycles} and let $c=(4C)^{-\frac{\ell}{\ell+1}}$. For each vertex $v\in V(K_n)$, let $f_v$ be an edge-colouring of $E(K_n)$ which uses $[k]$ as colours, where $k\leq cn^{1-\frac{2}{\ell+1}}$. Define an auxiliary graph $G$ whose vertex set is $V=V(K_n)\times [k]$ and in which $(u,i)$ and $(v,j)$ are joined by an edge if and only if $u\neq v$,  $f_u(uv)=i$ and $f_v(uv)=j$. Observe that there is a natural bijection between the edges of $G$ and the edges of $K_n$, so $e(G)=e(K_n)=\binom{n}{2}$. Moreover, clearly, the number of vertices in $G$ is $N=nk$. Hence, $$CN^{1+1/\ell}\leq C(cn^{2-\frac{2}{\ell+1}})^{1+1/\ell}=Cc^{1+1/\ell}n^2=n^2/4\leq \binom{n}{2}.$$
    For vertices $(u,i),(v,j)\in V(G)$, let us write $(u,i)\sim (v,j)$ if $u=v$. We claim that if $x,y\in V(G)$, then $y$ has at most one neighbour $z$ in $G$ such that $x\sim z$. Indeed, let $u$ be the first coordinate of $x$ and let $v$ be the first coordinate of $y$. Then (as $x\sim z$) the first coordinate of $z$ must be $u$, and (as $z$ is a neighbour of $y$ in $G$) the second coordinate of $z$ has to be the colour of the edge $uv$ in the colouring $f_u$. Hence, by Lemma \ref{lem:good cycles} applied with $s=1$, it follows that there is a $2\ell$-cycle $x_1x_2\dots x_{2\ell}$ in $G$ such that the first coordinate of each $x_i$ is different. Let $x_i=(u_i,\alpha_i)$. Then, for each $i$, we have $f_{u_i}(u_{i-1}u_i)=f_{u_i}(u_i,u_{i+1})=\alpha_i$, where indices are considered mod $2\ell$. Therefore, the $2\ell$-cycle $u_1u_2\dots u_{2\ell}$ in $K_n$ witnesses that the collection of colourings $\{f_v:v\in V(K_n)\}$ is not $(n,C_{2\ell})$-local. Hence, any $(n,C_{2\ell})$-local colouring must use more than $cn^{1-\frac{2}{\ell+1}}$ colours, which means that $g(n,C_{2\ell})> cn^{1-\frac{2}{\ell+1}}$.

    The second assertion of Theorem \ref{thm:cliques} follows trivially since $K_r$ contains $C_r$ as a subgraph on the same vertex set, so $g(n,K_r)\geq g(n,C_r)$.
\end{proof}

When $r$ is odd, we can use a variant of the above method to prove the bound stated in Remark~\ref{remark:odd cliques}. Let $\theta_{\ell,t}$ be the union of $t$ paths of length $\ell$ which share the same endpoints but are pairwise internally vertex-disjoint. Note that $\theta_{\ell,2}=C_{2\ell}$. A result of the second author {\cite[Theorem 3.7]{Jan23}} shows that Lemma \ref{lem:good cycles} can be generalized to find, under the same conditions (with a constant $C$ that depends in addition on $t$) a copy of $\theta_{\ell,t}$ without a pair of vertices related by $\sim$. Using this result, an argument very similar to the proof of Theorem \ref{thm:cliques} shows that if $f_v$ are $k$-colourings of the edge set of $K_n$ for $k\leq cn^{1-\frac{2}{\ell+1}}$ where $c$ is a sufficiently small constant, then we can find a copy $T$ of $\theta_{\ell,3}$ in $K_n$ with the property that for each $v \in V(T)$, the colour of every edge of $T$ incident to $v$ is the same in $f_v$. It follows that $g(n,\theta_{\ell,3}\cup C_{2q})=\Omega(n^{1-\frac{2}{\min(\ell,q)+1}})$, where $\theta_{\ell,3}\cup C_{2q}$ is the disjoint union of a $\theta_{\ell,3}$ and a $C_{2q}$. Choosing $\ell\in \{\lfloor\frac{r+1}{5}\rfloor,\lfloor\frac{r+1}{5}\rfloor-1\}$ such that $\ell$ is even and setting $q=\frac{r+1-3\ell}{2}$, we obtain $q\geq \ell\geq \frac{r-8}{5}$ and $|V(\theta_{\ell,3}\cup C_{2q})|=3\ell-1+2q=r$, so $g(n,K_r)\geq g(n,\theta_{\ell,3}\cup C_{2q})\geq \Omega(n^{1-\frac{2}{\ell+1}})\geq \Omega(n^{1-\frac{10}{r-3}})$.

\subsection{The proof of Theorem \ref{thm:characterize}}

In this subsection, we complete the proof of Theorem \ref{thm:characterize}. Our results from Section \ref{sec:upper} together with Theorem \ref{thm:bounded} already prove the ``only if" part of Theorem \ref{thm:characterize}, so it suffices to prove the ``if" part, which amounts to giving a polynomial lower bound for $g(n,H)$ in the remaining cases.

We remark that if $g(n,H)$ is polynomial, then so is $g(n,H^+)$, where $H^+$ is the graph obtained from $H$ by adding an isolated vertex. Indeed, assume that there are $k$-colourings $f_v$ of the edges of $K_n$ for each $v\in V(K_n)$ which form an $(n,H^+)$-local collection. Let $s=|V(H)|+1$ and let $u_1,\dots,u_s$ be arbitrary distinct vertices in $K_n$. Now for each $v\in V(K_n)$, define the colouring $f'_v$ of $E(K_n)$ by setting $f'_v(e)=(f_v(e),f_{u_1}(e),\dots,f_{u_s}(e))$. This is a colouring which uses at most $k^{s+1}$ colours. We claim that these colourings form an $(n,H)$-local colouring. Indeed, otherwise we could find a copy $T$ of $H$ in $K_n$ such that for each $v\in V(T)$ there are at least two edges in $T$ which have the same colour in $f'_v$. By definition, any such pair of edges have the same colour in all of $f_v,f_{u_1},\dots,f_{u_s}$, so, as $s>|V(H)|$, we may find a copy $T^+$ of $H^+$ (obtained by adding a vertex $u_i$ to $T$) such that for each $v\in V(T^+)$ there are two edges in $T^+$ which have the same colour in $f_v$. This contradicts the assumption that the $f_v$ are $(n,H^+)$-local. Hence, the $f'_v$ are indeed $(n,H)$-local, so $g(n,H)\leq g(n,H^+)^{s+1}$.

Together with our earlier observation, it also follows that if $g(n,H)$ is polynomial and $F$ contains $H$ as a (not necessarily spanning) subgraph, then $g(n,F)$ is also polynomial.

By the above discussion, it suffices to consider graphs with no isolated vertices. We first focus on graphs $H$ with precisely $4$ edges and no isolated vertices; the list of such graphs can be found in Table \ref{table:4edgegraphs}. It was shown in \cite{CX22} that $g(n,C_4)=\Omega(n^{1/3})$ and that $g(n,P_4)=\Omega(n^{1/5})$. When $H$ contains a triangle, then $g(n,H)$ is subpolynomial, and when $H$ is the union of $P_3$ and $P_1$, we do not know whether $g(n,H)$ is polynomial or not. The remaining $6$ cases are all covered by the following definition and theorem.

\begin{table}

\centering

\begin{tabular}{|c|c|c|}

\hline

\begin{minipage}{0.3\linewidth}
\vspace{5mm}
\centering
    \begin{tikzpicture}[scale=0.35]
	
	\draw[fill=black](0,0)circle(5pt);
        \draw[fill=black](3,0)circle(5pt);
        \draw[fill=black](3,3)circle(5pt);
        \draw[fill=black](0,3)circle(5pt);

        \draw[thick](0,0)--(3,0)--(3,3)--(0,3)--(0,0);
	
    \end{tikzpicture}
\vspace{5mm}
\end{minipage}%
&
\begin{minipage}{0.3\linewidth}
\vspace{5mm}
\centering
    \begin{tikzpicture}[scale=0.35]
	
	\draw[fill=black](0,0)circle(5pt);
        \draw[fill=black](3,0)circle(5pt);
        \draw[fill=black](6,0)circle(5pt);
        \draw[fill=black](1.5,2.6)circle(5pt);

        \draw[thick](0,0)--(3,0)--(1.5,2.6)--(0,0)(3,0)--(6,0);	
	
    \end{tikzpicture}
\vspace{5mm}
\end{minipage}%
&
\begin{minipage}{0.3\linewidth}
\vspace{5mm}
  \centering
    \begin{tikzpicture}[scale=0.35]
	
	\draw[fill=black](0,0)circle(5pt);
        \draw[fill=black](3,0)circle(5pt);
        \draw[fill=black](6,0)circle(5pt);
        \draw[fill=black](9,0)circle(5pt);
        \draw[fill=black](1.5,2.6)circle(5pt);

        \draw[thick](0,0)--(3,0)--(1.5,2.6)--(0,0)(6,0)--(9,0);	
	
    \end{tikzpicture}
\vspace{5mm}
\end{minipage}%
\\

\hline

\begin{minipage}{0.3\linewidth}
\vspace{5mm}
  \centering
    \begin{tikzpicture}[scale=0.35]
	
	\draw[fill=black](0,0)circle(5pt);
        \draw[fill=black](4,3)circle(5pt);
        \draw[fill=black](4,1)circle(5pt);
        \draw[fill=black](4,-1)circle(5pt);
        \draw[fill=black](4,-3)circle(5pt);

        \draw[thick](0,0)--(4,3)(0,0)--(4,1)(0,0)--(4,-1)(0,0)--(4,-3);	
    \end{tikzpicture}

\vspace{5mm}
\end{minipage}%
&
\begin{minipage}{0.3\linewidth}
\vspace{5mm}
  \centering
    \begin{tikzpicture}[scale=0.35]
	
	\draw[fill=black](0,0)circle(5pt);
        \draw[fill=black](3,2)circle(5pt);
        \draw[fill=black](3,0)circle(5pt);
        \draw[fill=black](3,-2)circle(5pt);
        \draw[fill=black](6,3)circle(5pt);

        \draw[thick](0,0)--(3,2)--(6,3)(0,0)--(3,0)(0,0)--(3,-2);	
    \end{tikzpicture}
\vspace{5mm}
\end{minipage}%
&
\begin{minipage}{0.3\linewidth}
\vspace{5mm}
  \centering
    \begin{tikzpicture}[scale=0.35]
	
	\draw[fill=black](0,0)circle(5pt);
        \draw[fill=black](3,2)circle(5pt);
        \draw[fill=black](3,0)circle(5pt);
        \draw[fill=black](3,-2)circle(5pt);
        \draw[fill=black](6,2)circle(5pt);
        \draw[fill=black](6,-2)circle(5pt);

        \draw[thick](0,0)--(3,2)(0,0)--(3,0)(0,0)--(3,-2)(6,2)--(6,-2);	
    \end{tikzpicture}
\vspace{5mm}
\end{minipage}%

\\

\hline

\begin{minipage}{0.3\linewidth}
\vspace{5mm}
\centering
    \begin{tikzpicture}[scale=0.35]
	
	\draw[fill=black](0,0)circle(5pt);
        \draw[fill=black](3,0)circle(5pt);
        \draw[fill=black](6,0)circle(5pt);
        \draw[fill=black](9,0)circle(5pt);
        \draw[fill=black](12,0)circle(5pt);

        \draw[thick](0,0)--(3,0)--(6,0)--(9,0)--(12,0);
	
    \end{tikzpicture}
\vspace{5mm}
\end{minipage}%
&
\begin{minipage}{0.3\linewidth}
\vspace{5mm}
\centering
    \begin{tikzpicture}[scale=0.35]
	
	\draw[fill=black](0,0)circle(5pt);
        \draw[fill=black](3,0)circle(5pt);
        \draw[fill=black](6,0)circle(5pt);
        \draw[fill=black](9,0)circle(5pt);
        \draw[fill=black](3,-3)circle(5pt);
        \draw[fill=black](6,-3)circle(5pt);

        \draw[thick](0,0)--(3,0)--(6,0)--(9,0)(3,-3)--(6,-3);
	
    \end{tikzpicture}
\vspace{5mm}
\end{minipage}%
&
\begin{minipage}{0.3\linewidth}
\vspace{5mm}
  \centering
    \begin{tikzpicture}[scale=0.35]
	
	\draw[fill=black](0,0)circle(5pt);
        \draw[fill=black](3,0)circle(5pt);
        \draw[fill=black](6,0)circle(5pt);
        \draw[fill=black](0,3)circle(5pt);
        \draw[fill=black](3,3)circle(5pt);
        \draw[fill=black](6,3)circle(5pt);

        \draw[thick](0,0)--(3,0)--(6,0);	
        \draw[thick](0,3)--(3,3)--(6,3);	
	
    \end{tikzpicture}
\vspace{5mm}
\end{minipage}%
\\

\hline

\begin{minipage}{0.25\linewidth}
\vspace{5mm}
  \centering
    \begin{tikzpicture}[scale=0.35]
	
	\draw[fill=black](0,0)circle(5pt);
        \draw[fill=black](3,0)circle(5pt);
        \draw[fill=black](6,0)circle(5pt);
        \draw[fill=black](1.5,-3)circle(5pt);
        \draw[fill=black](4.5,-3)circle(5pt);
        \draw[fill=black](1.5,-6)circle(5pt);
        \draw[fill=black](4.5,-6)circle(5pt);

        \draw[thick](0,0)--(3,0)--(6,0)(1.5,-3)--(4.5,-3)(1.5,-6)--(4.5,-6);	
    \end{tikzpicture}

\vspace{5mm}
\end{minipage}%
&
\begin{minipage}{0.25\linewidth}
\vspace{5mm}
  \centering
    \begin{tikzpicture}[scale=0.35]
	
	\draw[fill=black](0,0)circle(5pt);
        \draw[fill=black](3,0)circle(5pt);
        \draw[fill=black](6,0)circle(5pt);
        \draw[fill=black](9,0)circle(5pt);
        \draw[fill=black](0,3)circle(5pt);
        \draw[fill=black](3,3)circle(5pt);
        \draw[fill=black](6,3)circle(5pt);
        \draw[fill=black](9,3)circle(5pt);

        \draw[thick](0,0)--(3,0)(6,0)--(9,0)(0,3)--(3,3)(6,3)--(9,3);	
    \end{tikzpicture}
\vspace{5mm}
\end{minipage}%
&
\begin{minipage}{0.25\linewidth}
\vspace{5mm}
  \centering
    \begin{tikzpicture}[scale=0.35]
	
    \end{tikzpicture}
\vspace{5mm}
\end{minipage}%

\\

\hline

\end{tabular}

\caption{The list of graphs with $4$ edges and no isolated vertices}
\label{table:4edgegraphs}

\end{table}

\begin{table}

\centering

\begin{tabular}{|c|c|c|}

\hline

\begin{minipage}{0.3\linewidth}
\vspace{5mm}
  \centering
    \begin{tikzpicture}[scale=0.35]
	
	\draw[fill=black](0,0)circle(5pt);
        \draw[fill=black](4,3)circle(5pt);
        \draw[fill=black](4,1)circle(5pt);
        \draw[fill=black](4,-1)circle(5pt);
        \draw[fill=black](4,-3)circle(5pt);

        \draw[thick,red](0,0)--(4,3)(0,0)--(4,1);
        \draw[thick,blue](0,0)--(4,-1)(0,0)--(4,-3);	
    \end{tikzpicture}

\vspace{5mm}
\end{minipage}%
&
\begin{minipage}{0.3\linewidth}
\vspace{5mm}
  \centering
    \begin{tikzpicture}[scale=0.35]
	
	\draw[fill=black](0,0)circle(5pt);
        \draw[fill=black](3,2)circle(5pt);
        \draw[fill=black](3,0)circle(5pt);
        \draw[fill=black](3,-2)circle(5pt);
        \draw[fill=black](6,3)circle(5pt);

        \draw[thick,red](0,0)--(3,2)--(6,3);
        \draw[thick,blue](0,0)--(3,0)(0,0)--(3,-2);	
    \end{tikzpicture}
\vspace{5mm}
\end{minipage}%
&
\begin{minipage}{0.3\linewidth}
\vspace{5mm}
  \centering
    \begin{tikzpicture}[scale=0.35]
	
	\draw[fill=black](0,0)circle(5pt);
        \draw[fill=black](3,2)circle(5pt);
        \draw[fill=black](3,0)circle(5pt);
        \draw[fill=black](3,-2)circle(5pt);
        \draw[fill=black](6,2)circle(5pt);
        \draw[fill=black](6,-2)circle(5pt);

        \draw[thick,red](0,0)--(3,2)(6,2)--(6,-2);
        \draw[thick,blue](0,0)--(3,0)(0,0)--(3,-2);
    \end{tikzpicture}
\vspace{5mm}
\end{minipage}%

\\

\hline

\begin{minipage}{0.3\linewidth}
\vspace{5mm}
  \centering
    \begin{tikzpicture}[scale=0.35]
	
	\draw[fill=black](0,0)circle(5pt);
        \draw[fill=black](3,0)circle(5pt);
        \draw[fill=black](6,0)circle(5pt);
        \draw[fill=black](0,3)circle(5pt);
        \draw[fill=black](3,3)circle(5pt);
        \draw[fill=black](6,3)circle(5pt);

        \draw[thick,blue](0,0)--(3,0)--(6,0);	
        \draw[thick,red](0,3)--(3,3)--(6,3);	
	
    \end{tikzpicture}
\vspace{5mm}
\end{minipage}%
&
\begin{minipage}{0.25\linewidth}
\vspace{5mm}
  \centering
    \begin{tikzpicture}[scale=0.35]
	
	\draw[fill=black](0,0)circle(5pt);
        \draw[fill=black](3,0)circle(5pt);
        \draw[fill=black](6,0)circle(5pt);
        \draw[fill=black](1.5,-3)circle(5pt);
        \draw[fill=black](4.5,-3)circle(5pt);
        \draw[fill=black](1.5,-6)circle(5pt);
        \draw[fill=black](4.5,-6)circle(5pt);

        \draw[thick,red](0,0)--(3,0)--(6,0);
        \draw[thick,blue](1.5,-3)--(4.5,-3);
        \draw[thick,blue](1.5,-6)--(4.5,-6);	
    \end{tikzpicture}

\vspace{5mm}
\end{minipage}%
&
\begin{minipage}{0.25\linewidth}
\vspace{5mm}
  \centering
    \begin{tikzpicture}[scale=0.35]
	
	\draw[fill=black](0,0)circle(5pt);
        \draw[fill=black](3,0)circle(5pt);
        \draw[fill=black](6,0)circle(5pt);
        \draw[fill=black](9,0)circle(5pt);
        \draw[fill=black](0,3)circle(5pt);
        \draw[fill=black](3,3)circle(5pt);
        \draw[fill=black](6,3)circle(5pt);
        \draw[fill=black](9,3)circle(5pt);

        \draw[thick,blue](0,0)--(3,0)(6,0)--(9,0);
        \draw[thick,red](0,3)--(3,3)(6,3)--(9,3);	
    \end{tikzpicture}
\vspace{5mm}
\end{minipage}%

\\

\hline

\end{tabular}

\caption{The list of nice graphs with $4$ edges and no isolated vertices}
\label{table:nicegraphs}

\end{table}

\begin{definition}
    Let us call a graph $H$ \emph{nice} if it contains distinct edges $e_1,e_2,f_1,f_2$ such that $(e_1\cup e_2)\cap (f_1\cup f_2)\subset f_1\cap f_2$.
\end{definition}

\begin{remark}
    That the remaining $6$ graphs from Table \ref{table:4edgegraphs} are all nice is demonstrated in Table \ref{table:nicegraphs}. For each graph, suitable edges $e_1$ and $e_2$ are coloured red, while suitable $f_1$ and $f_2$ are coloured blue.
\end{remark}

\begin{theorem} \label{thm:nice graphs}
    If $H$ is a nice graph, then $g(n,H)=\Omega(n^{1/6})$.
\end{theorem}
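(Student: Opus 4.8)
The plan is to argue by contradiction. Suppose $\{f_v:v\in V(K_n)\}$ is an $(n,H)$-local collection of $k$-colourings of $E(K_n)$ with $k\le cn^{1/6}$, where $c>0$ is a small absolute constant; I will exhibit a copy of $H$ in $K_n$ that is rainbow at none of its vertices. Fix distinct edges $e_1,e_2,f_1,f_2$ of $H$ witnessing that $H$ is nice, let $P$ be the set of vertices incident to $e_1$ or $e_2$ (so $P=e_1\cup e_2$ as a set of vertices), and set $W=f_1\cap f_2$, so that niceness reads $P\cap(f_1\cup f_2)\subseteq W$. The aim is an injective map $\phi\colon V(H)\to V(K_n)$ for which, writing $E_i=\phi(e_i)$ and $F_i=\phi(f_i)$, one has (i) $f_{\phi(v)}(E_1)=f_{\phi(v)}(E_2)$ for every $v\in V(H)\setminus P$, and (ii) $f_{\phi(v)}(F_1)=f_{\phi(v)}(F_2)$ for every $v\in P$. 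Since $\phi$ is injective, $E_1\ne E_2$ and $F_1\ne F_2$, so (i) together with (ii) says exactly that $\phi(H)$ has a repeated colour in $f_u$ for every vertex $u$ of the copy, contradicting $(n,H)$-locality.

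\textbf{Step 1: choosing $E_1,E_2$.} The subgraph of $H$ spanned by $e_1,e_2$ is a path of length $2$ or a matching of size $2$. First I would choose $\phi$ on $P$ — equivalently, the edges $E_1,E_2$, with $E_1\cap E_2$ a single vertex or empty according to whether $e_1\cap e_2$ is — so that $A:=\{u\in V(K_n)\setminus\phi(P): f_u(E_1)=f_u(E_2)\}$ satisfies $|A|\ge n/(C_1k)$ for an absolute constant $C_1$. This is a double-counting argument over pairs (a vertex $u$, a candidate pair of edges). In the path case one fixes the image of the centre; for each $u$ the restriction of $f_u$ to the edges at that vertex has a colour class of size at least $(n-2)/k$, hence $\gtrsim n^2/k$ monochromatic pairs of such edges, and averaging over $u$ gives a good choice of the two endpoints other than the centre. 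In the matching case one must avoid the pathology that a colour class which is a star contributes no pair of disjoint monochromatic edges: the colour classes of $f_u$ of size less than $3n$ carry together fewer than $3nk\ll\binom{n-1}{2}$ edges, so the at most $k$ classes of size $\ge 3n$ span $\gtrsim n^2$ edges in total, and each such class $C$, of maximum degree $<n\le|C|/3$, contains at least $\binom{|C|}{2}-|C|\cdot n\ge |C|^2/12$ pairs of disjoint edges; convexity then yields $\gtrsim n^4/k$ monochromatic disjoint pairs of edges for $f_u$, and averaging over the $n$ choices of $u$ produces the desired $E_1,E_2$ and the set $A$.

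\textbf{Step 2: choosing $F_1,F_2$ and placing the rest, all inside $A$.} Put $U=\phi(P)$, so $|U|\le4$, and let $\Phi$ be the combined colouring $\Phi(e)=(f_u(e))_{u\in U}$, which uses at most $k^{|U|}\le k^4$ colours. Any placement of $V(H)\setminus P$ inside $A$ already gives (i); only (ii) remains, and it depends only on $\Phi(F_1)$ and $\Phi(F_2)$. There are three cases according to $W$, each handled by pigeonhole or by re-running Step 1 inside $K_A$. If $W=\{w\}$ with $w\notin P$: let $\phi(w)=\omega$ be any vertex of $A$; the at most $k^4$ values $\Phi(\omega x)$ with $x\in A\setminus\{\omega\}$ cannot all be distinct because $|A|\ge n/(C_1k)>k^4+1$, so two agree and define $\phi(r_1),\phi(r_2)$. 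If $W=\{w\}$ with $w\in P$: then $\omega=\phi(w)\in U$ is already determined and the same pigeonhole over $\Phi(\omega x)$, $x\in A$, applies. If $W=\emptyset$: then $f_1,f_2$ are disjoint, and I need disjoint edges $F_1,F_2$ in $K_A$ with $\Phi(F_1)=\Phi(F_2)$, which comes from running the matching case of Step 1 for the colouring $\Phi$ of $K_A$ with $K:=k^4\le|A|/12$ colours. Finally, extend $\phi$ to the remaining vertices of $H$ (endpoints of any further edges, isolated vertices) by sending them to arbitrary unused vertices of $A$, which is possible since $|A|\ge n/(C_1k)$ is far larger than $|V(H)|$. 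A routine verification shows $\phi$ is injective and satisfies (i) and (ii).

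\textbf{The main obstacle.} The only inequalities used are $n/(C_1k)>k^4+O(1)$ and $k^4\le n/(12C_1k)$, i.e.\ $k^5\lesssim n$, which hold for all large $n$ once $c$ is small; in fact this scheme gives the stronger bound $g(n,H)=\Omega(n^{1/5})$. The one genuinely delicate point is the matching case of Step 1 — equivalently the $W=\emptyset$ case of Step 2 — where one must rule out the possibility that essentially every colour class of $f_u$ is a star: this is exactly the content of the estimate that colour classes of size less than $3n$ carry a negligible fraction of all edges. Everything else is pigeonhole, double counting, and the bookkeeping needed to keep the chosen vertices distinct.
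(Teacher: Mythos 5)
Your proof is correct and follows essentially the same approach as the paper's: choose edges $e_1,e_2,f_1,f_2$ witnessing niceness, find images $E_1,E_2$ and a large set $A$ of vertices that do not distinguish them (by convexity plus averaging/pigeonhole), then pigeonhole on the combined colouring $\Phi$ from the at most four vertices of $E_1\cup E_2$ (giving at most $k^4$ labels) to place $F_1,F_2$ inside $A$. Your accounting is slightly more careful than the paper's (in particular using $|A|=\Theta(n/k)$ rather than $|A|\le n$ when extracting two disjoint monochromatic edges), which is why you obtain $\Omega(n^{1/5})$ rather than $\Omega(n^{1/6})$; this is a genuine (if minor) sharpening and the theorem statement as given still follows.
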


\begin{proof}
    Choose distinct edges $e_1,e_2,f_1,f_2$ in $H$ such that $(e_1\cup e_2)\cap (f_1\cup f_2)\subset f_1\cap f_2$. For each $v\in V(K_n)$, let $f_v$ be a colouring of the edges of $K_n$ which uses $k\leq cn^{1/6}$ colours, where $c$ is a sufficiently small constant which depends on $H$. It suffices to prove that the collection of these colourings is not $(n,H)$-local.

    We need the following claim.

    \medskip

    \noindent \emph{Claim.}
    \begin{enumerate}[label=(\alph*)]
        \item There exist disjoint edges $p$ and $q$ in $K_n$ such that the number of vertices $v$ in $K_n$ with $f_v(p)=f_v(q)$ is $\Omega(n/k)$. \label{claim:disjoint}

        \item There exist distinct, intersecting edges $s$ and $t$ in $K_n$ such that the number of vertices $v$ in $K_n$ with $f_v(s)=f_v(t)$ is $\Omega(n/k)$. \label{claim:intersecting}
    \end{enumerate}

    \medskip

    \noindent \emph{Proof of Claim.} (a) It is easy to see by convexity that for any vertex $v\in V(K_n)$, there are $\Omega(n^4/k)$ pairs of disjoint edges $p$ and $q$ in $K_n$ such that $f_v(p)=f_v(q)$. Hence, the number of triples $(v,p,q)$ where $p$ and $q$ are disjoint edges in $K_n$ and $f_v(p)=f_v(q)$ is $\Omega(n^5/k)$. It follows from the pigeon hole principle that there exist $p$ and $q$ for which the number of suitable choices for $v$ is $\Omega(n/k)$.

    (b) It is easy to see by convexity that for any vertex $v\in V(K_n)$, there are $\Omega(n^3/k)$ pairs of distinct, intersecting edges $s$ and $t$ in $K_n$ such that $f_v(s)=f_v(t)$. Hence, the number of triples $(v,s,t)$ where $s$ and $t$ are distinct, intersecting edges in $K_n$ and $f_v(s)=f_v(t)$ is $\Omega(n^4/k)$. It follows from the pigeon hole principle that there exist $s$ and $t$ for which the number of suitable choices for $v$ is $\Omega(n/k)$. $\Box$

    \medskip
    
    Now if $e_1$ and $e_2$ are disjoint in $H$, let us use part \ref{claim:disjoint} of the claim to find disjoint edges $p$ and $q$ in $K_n$ such that there is a set $A$ of $\Omega(n/k)$ vertices in $V(K_n)\setminus (p\cup q)$ such that each $v\in A$ satisfies $f_v(p)=f_v(q)$. Similarly, if $e_1$ and $e_2$ intersect each other in $H$, then let us use part \ref{claim:intersecting} of the claim to find distinct, intersecting edges $p$ and $q$ in $K_n$ such that there is a set $A$ of $\Omega(n/k)$ vertices in $V(K_n)\setminus (p\cup q)$ such that each $v\in A$ satisfies $f_v(p)=f_v(q)$. Our aim is to find a copy of $H$ in $K_n$ in which $e_1$ and $e_2$ are mapped to $p$ and $q$ (in an arbitrary way), and all the remaining vertices of $H$ are mapped to $A$. By the definition of $A$, for any such embedding $T$ of $H$ we have that $T$ is not rainbow with respect to $f_v$ whenever $v\in V(T)\setminus (p\cup q)$ (since we have $f_v(p)=f_v(q)$ for any such vertex). We will embed the vertices in $(f_1\cup f_2)\setminus (e_1\cup e_2)$ into $K_n$ in a way that for every $v\in p\cup q$ the images of the edges $f_1$ and $f_2$ will have the same colour with respect to the colouring $f_v$. If we can do this, we obtain an embedding $T$ of $H$ such that for each $v\in V(T)$ the graph $T$ is not rainbow with respect to $f_v$, showing that the collection of colourings is not $(n,H)$-local.

    We consider two cases. First, assume that $f_1$ and $f_2$ are disjoint in $H$. In particular, $f_1\cup f_2$ is disjoint from $e_1\cup e_2$. Label each edge between two vertices of $A$ by its colours with respect to the colourings $f_v$ for all $v\in p\cup q$. Depending on whether $p$ and $q$ intersect or not, this labels the edges by a triple or quadruple of colours. In particular, there are at most $k^4$ possible labels. Since there are $\Omega(n^2/k^2)$ edges between two vertices of $A$, there will be a label that appears on $\Omega(n^2/k^6)$ different edges. If $c$ is sufficiently small, then we obtain two non-intersecting edges within $A$ with the same label. Choosing these two edges as the image of $f_1$ and $f_2$, and mapping all remaining vertices of $H$ arbitrarily to $A$, we obtain the desired embedding of $H$.

    The second case is when $f_1$ and $f_2$ intersect each other in $H$. If the common vertex of $f_1$ and $f_2$ belongs to $e_1\cup e_2$, then we have already mapped it to some vertex $x$; else let us choose an arbitrary vertex $x\in A$ as its image in the embedding. Labelling each $y\in A\setminus \{x\}$ by the colours of $f_v(xy)$ for all $v\in p\cup q$, there are at most $k^4$ possible labels, so if $c$ is sufficiently small, then there exist $y\neq z$ in $A\setminus \{x\}$ such that $f_v(xy)=f_v(xz)$ holds for all $v\in p\cup q$. Mapping $f_1$ to $xy$ and $f_2$ to $xz$, and mapping all remaining vertices of $H$ to arbitrary vertices in $A$, we obtain a suitable embedding of $H$.
\end{proof}

We are now in a position to complete the proof of Theorem \ref{thm:characterize}. Our Theorem \ref{thm:nice graphs} shows that whenever $H$ contains one of the graphs in Table \ref{table:nicegraphs} as a subgraph, we have $g(n,H)=\Omega(n^{1/6})$. As mentioned above, Cheng and Xu \cite{CX22} proved that $g(n,C_4)=\Omega(n^{1/3})$ and $g(n,P_4)=\Omega(n^{1/5})$. Observe that any graph with $4$ edges which is triangle-free and not the disjoint union of $P_3$ and $P_1$ is equal to $C_4$, $P_4$ or one of the graphs in Table \ref{table:nicegraphs}. This proves Theorem \ref{thm:characterize} for graphs $H$ with $4$ edges. Finally, observe that any graph with at least $5$ edges contains $C_4$, $P_4$ or one of the graphs in Table \ref{table:nicegraphs} as a subgraph (clearly, it suffices to verify this for graphs obtained by adding an edge to one of the following three graphs: the triangle with a pendant edge, the triangle with an isolated edge and the union of a $P_3$ and a $P_1$). This completes the proof of Theorem \ref{thm:characterize}.

\section{Bounds for the Erdős--Gyárfás function}\label{sec:EGy}
\subsection{The proof of Theorem~\ref{theorem_EGygeneral}}
In this subsection we prove Theorem~\ref{theorem_EGygeneral} about subpolynomial values for the Erdős--Gyárfás function. We begin by briefly discussing our approach. We will use Theorem~\ref{theorem_EGy43}, i.e., an appropriate colouring for $r=3$, to construct our colouring for $r=4$ (and larger values of $r$). Let $c$ denote the colouring of the triples provided by Theorem~\ref{theorem_EGy43}. Let us first try to colour the edges of $K_n^{(4)}$ by simply ignoring one of the vertices: $e$ will receive colour $c(e-v(e))$, where $v(e)$ is some special vertex of $e$ (and $e-v(e)$ is the set $e\setminus\{v(e)\}$). Note that we will need to describe a rule to choose the special vertex $v(e)$ that we ignore.

Now let us see when this approach provides a colouring satisfying the conditions. Observe that a copy of $K_{5}^{(4)}$ receives at least $4$ colours if and only if there is at most one colour repetition, i.e., at most one pair of edges of the $K_{5}^{(4)}$ share the same colour. Assume instead that in our colouring we have two pairs of edges $e,e'$ and $f,f'$ in some $K_{5}^{(4)}$ sharing the same colour. Since any edge misses exactly one vertex of this $K_5^{(4)}$, there is a vertex $p$ contained in the intersection $e\cap e'\cap f\cap f'$. If we can make sure that $p$ is the special vertex that we ignore in each of $e,e',f,f'$, then we get a contradiction from $c(e-p)=c(e'-p)$ and $c(f-p)=c(f'-p)$ by the definition of $c$.

To choose the special vertex $v(e)$, one simple method is to take an ordering of all of the vertices, and pick $v(e)$ to be the largest element of $e$. Of course, the point $p\in e\cap e'\cap f\cap f'$ does not in general have to be the largest element of $e,e',f,f'$. So, instead of taking just one ordering, we will take many total orders, and our final colouring will be the product of the colourings corresponding to each of these total orders. We will have to pick our list of total orders in a special way; we will make use of the following result of Hajnal (see~\cite{spencer1971minimal}).

\begin{theorem}[\cite{spencer1971minimal}]\label{theorem_totalorders}
	Let $k$ be a fixed positive integer. For any positive integer $n$ and any set $V$ of size $n$, we can find $M=O(\log\log n)$ total orders $<_1, \dots, <_M$ on $V$ such that whenever $a_1,\dots,a_k$ are distinct elements of $V$, then there is some $j$ (with $1\leq j\leq M)$ such that $a_i<_j a_1$ for $i=2,\dots,k$. In other words, $a_1$ is maximal among $a_1,\dots,a_k$ in at least one of our total orders.
\end{theorem}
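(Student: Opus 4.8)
The plan is to prove this classical fact (due to Hajnal) by a recursive construction that reduces the problem for a set of size $n$ to the same problem for a set of size roughly $\sqrt{n}$, at the cost of only a bounded (in terms of $k$) number of extra orders; since bringing $n$ down to a size bounded in terms of $k$ by repeated square-rooting takes $O(\log\log n)$ steps, such a recursion solves to $M=O(\log\log n)$. It is convenient to reformulate the required property: call a family of total orders on a set $X$ \emph{$k$-suitable} if for every $k$-element $S\subseteq X$ and every $x\in S$ some order of the family has $x$ as the maximum of $S$. This is exactly the condition in the statement --- the distinguished element $a_1$ is allowed to be any element of the $k$-set --- so the goal is a $k$-suitable family of size $O(\log\log n)$ on every $n$-element set.

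For the reduction step I would, given $n$, choose $N=O(\sqrt{n})$ with $\binom{N}{2}\ge n$ and identify the ground set with $n$ distinct $2$-element subsets of a set $W$ of size $N$; by induction there is a $k$-suitable family $<_1,\dots,<_M$ on $W$ with $M=O(\log\log N)=O(\log\log n)$ (in fact satisfying a slightly stronger invariant, see below). From each $<_i$ I would build the colexicographic order $\prec_i$ on the $2$-subsets of $W$: to compare $\{a,b\}$ and $\{c,d\}$, look at the $<_i$-largest element of their symmetric difference and call larger the pair that contains it --- equivalently, compare the $<_i$-decreasing sortings of the two pairs lexicographically. A short check shows that a pair $\{p,q\}$ is the $\prec_i$-largest among chosen pairs $S_1,\dots,S_k$ whenever $\{p,q\}$ is the $<_i$-top two of the ($\le 2k$)-element set $U=\bigcup_\ell S_\ell$; hence it suffices that on $W$, for every set of size at most $2k$ and every prescribed $2$-element subset of it, some $<_i$ places that subset on top. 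One therefore carries a ``top-$2$''-type strengthening of $k$-suitability through the induction, chosen so that the colex construction --- possibly together with a bounded number of auxiliary orders per level, to absorb the degenerate configurations in which $\{p,q\}$ shares a vertex with many of the $S_\ell$ --- propagates it from $W$ to its $2$-subsets. The base case is immediate: on a set whose size is bounded in terms of $k$ one can simply take all its total orders.

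The main obstacle is exactly this reduction step, and within it the requirement that only boundedly many new orders be added per level. The naive alternative --- forming lexicographic-product orders on $W\times W$, one for each pair of a ``first-coordinate'' order and a ``second-coordinate'' order from the family on $W$ --- \emph{multiplies} the number of orders, and so would only give the much weaker bound $M=(\log n)^{O(1)}$; extracting an \emph{additive} reduction (reusing the orders on $W$ essentially verbatim and clearing up all collision and degeneracy cases with constantly many extra orders) is the crux, and this is where the correct strengthened inductive invariant and a somewhat delicate case analysis on the overlap pattern of $S_1,\dots,S_k$ are needed. As a sanity check on the strength of the target bound, a one-line probabilistic argument --- take $M$ independent uniformly random total orders on $V$ and union-bound over the at most $n^k$ pairs $(S,x)$ --- already yields the weaker estimate $M=O(\log n)$.
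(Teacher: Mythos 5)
This theorem is not proved in the paper; it is quoted as a known result of Hajnal, cited from Spencer's paper on minimal scrambling sets of simple orders, so there is no in-paper argument to compare your attempt against. What you have written is a sketch of (what I believe is essentially) the classical proof: reduce a ground set of size $n$ to one of size $O(\sqrt n)$ by viewing elements as pairs, build colexicographic orders from the orders on the smaller set, and aim for an \emph{additive} cost of $O_k(1)$ new orders per level so that the recursion solves to $O(\log\log n)$. The one concrete lemma you state is correct: if $p,q$ are the two $<_i$-largest elements of $U=\bigcup_\ell S_\ell$, then $\{p,q\}$ is $\prec_i$-maximal among all $2$-subsets of $U$, and hence among $S_1,\dots,S_k$. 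Your sanity-check that $M=O(\log n)$ follows from random orders and a union bound is also correct.

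However, as a proof the proposal has a genuine gap, which you yourself flag as ``the crux''. The argument needs a strengthened inductive invariant (something like: for every $W$-subset $U$ of bounded size and every prescribed $2$-subset $\{p,q\}\subseteq U$, some order places $p,q$ on top of $U$), and it must be shown that the colex construction together with only $O_k(1)$ auxiliary orders carries this invariant from $W$ to $\binom{W}{2}$. You never state the invariant precisely, and you do not show it is self-propagating: the natural candidate (``top-$2$ suitability'') on the pair level unfolds into a condition about prescribing which two \emph{pairs} are $\prec_i$-largest, which does not obviously reduce to the same condition on $W$ with bounded overhead; the ``degenerate configurations'' you mention are exactly where this could fail, and handling them with boundedly many extra orders is a nontrivial case analysis, not a remark. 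Without a precise invariant and a verified propagation step the recursion $M(n)\le M(O(\sqrt n))+O_k(1)$ is asserted rather than proved, so the bound $M=O(\log\log n)$ does not yet follow. The shape of the plan is right, but the central step is missing.
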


We are now ready to prove Theorem~\ref{theorem_EGygeneral}.

\begin{proof}[Proof of Theorem~\ref{theorem_EGygeneral}]
	We show the statement by induction on $r$. The $r=3$ case is exactly Theorem~\ref{theorem_EGy43}, so assume that $r\geq 4$ and the statement holds for smaller values of $r$. Let $V$ be our set of $n$ vertices. By the induction hypothesis, we can pick a colouring $c$ of the $(r-1)$-element subsets of $V$ such that at least $r-1$ colours appear among any $r$ vertices and $c$ uses $e^{(\log n)^{2/5+o(1)}}$ colours. Furthermore, by Theorem~\ref{theorem_totalorders} applied for $k=r+1$, we can pick $M=O(\log\log n)$ total orders $<_1,\dots,<_M$ on $V$ such that for any distinct vertices $a_1,\dots,a_{r+1}\in V$, $a_1$ is maximal among these $r+1$ vertices in one of the total orders $<_j$. Given a (non-empty) set $W\subseteq V$, let us write $\operatorname{max}_j W$ for the element of $W$ which is maximal in $W$ in the ordering $<_j$. We define a colouring $c'$ of $r$-element subsets of $V$ by setting
	$$c'(e)=(c(e-\operatorname{max}_1(e)),c(e-\operatorname{max}_2(e)),\dots,c(e-\operatorname{max}_M(e))).$$
	
	In other words, $c_j$ is the product of all the colourings $c'_j(e)=c(e-\operatorname{max}_j(e))$ formed by using the colouring $c$ after ignoring the largest element of $e$ in the ordering $<_j$. Note that the number of colours used is at most
	$$\left(e^{(\log n)^{2/5+o(1)}}\right)^{O(\log \log n)}=e^{(\log n)^{2/5+o(1)}}.$$
	
	We claim that in any $K_{r+1}^{(r)}$, at least $r$ colours appear under the colouring $c'$. Assume, for contradiction, that this is not the case. Then there exist a set $W$ of $r+1$ vertices and subsets $e,e',f,f'\subseteq W$ of size $r$ (with $e\not =e', f\not =f'$) such that $c'(e)=c'(e')$, $c'(f)=c'(f')$ and $\{e,e'\}\not =\{f,f'\}$. Since any $r$-edge inside $W$ misses exactly one element of $W$, we have $|e\cap e'\cap f\cap f'|\geq (r+1)-4\geq 1$. Pick any element $p\in e\cap e'\cap f\cap f'$, and let $j\in\{1,\dots, M\}$ be such that $p=\operatorname{max}_j(W)$.
	
	Since $c'(e)=c'(e')$, we have (by taking $j$th coordinates) $c(e-p)=c(e'-p)$. Similarly, $c(f-p)=c(f'-p)$. But this means that, under the colouring $c$, at most $r-2$ different colours appear in the $K_{r}^{(r-1)}$ induced by $W-p$. This contradicts our choice of $c$ and finishes the proof.
\end{proof}

\subsection{Connections to Problem~\ref{prob:Wig}}\label{subsection_connectiontoWig}
Recall that lower bounds on $g(n,P_3)$ also imply the same lower bounds for Problem~\ref{prob:Wig}. However, we have seen (Theorem~\ref{thm:P3}) that $g(n,P_3)$ is subpolynomial, so this method cannot give a polynomial lower bound for the problem of Karchmer and Wigderson.

It is very natural to try to give lower bounds to Problem~\ref{prob:Wig} by considering only specific forms of triples $x,y,z$; in particular, it is natural to take $(x\cup y\cup z)\setminus (x\cap y\cap z)$ to be small to make sure that only a few conditions need to be satisfied. Thus, instead of taking $x,y,z$ to be edges of a $P_3$, we could add to each of them the same set of vertices, i.e., take $x=x_0\cup S, y=y_0\cup S, z=z_0\cup S$, where $x_0,y_0,z_0$ are sets of size $2$ forming a $P_3$.

However, Theorem~\ref{theorem_EGygeneral} implies that this cannot work for sets $S$ of some given bounded size $\ell$. Indeed, we can proceed similarly as for $P_3$s. Let $\gamma$ be a colouring coming from Theorem~\ref{theorem_EGygeneral} for $r=\ell+3$, and define $f_v(x)$ to be $\gamma(\{v\}\cup x)$ if $v\not \in x$ and some arbitrary different colour if $v\in x$. Then, similarly to the proof of Theorem~\ref{thm:P3}, we find that whenever $x=x_0\cup S$, $y=y_0\cup S$ and $z=z_0\cup S$ such that $|S|=\ell$ and $x_0,y_0,z_0$ form a $P_3$ $abcd$ (disjoint from $S$), then $x,y,z$ receive distinct colours in either $f_a$ or $f_d$.

More generally, similar arguments can be used to construct colourings $f_v$ for each $v\in V(K_n)$ using a subpolynomial number of colours such that whenever $x,y,z$ are distinct subsets of $V(K_n)$ of bounded size, then there exists some $v\in (x\cup y\cup z)\setminus (x\cap y\cap z)$ for which $f_v(x)$, $f_v(y)$ and $f_v(z)$ are distinct.

Note, however, that the argument above does not work when $|S|$ is large (say, at least logarithmic in $n$); perhaps considering such $S$ might give better bounds.

\section{Concluding remarks} \label{sec:concluding}
In this paper we have determined, for each graph $H$ other than the disjoint union of $P_3$ and $P_1$ with an arbitrary number of isolated vertices, whether the growth of $g(n,H)$ is polynomial. The natural question left open by our investigations is whether $g(n,H)$ is polynomial when $H$ is $P_3\cup P_1$.

Problem \ref{prob:Wig} also remains open, with the best known lower bound being $g(n,P_3)=\Omega((\frac{\log n}{\log \log n})^{1/4})$ due to Alon and Ben-Eliezer \cite{AB11}. Our Theorem \ref{thm:P3} shows that this approach cannot give a polynomial lower bound. Indeed, as we mentioned in Subsection \ref{subsection_connectiontoWig}, one cannot obtain a polynomial lower bound by considering only vectors of bounded Hamming weight. On the other hand, one can prove a polynomial upper bound using the asymmetric Lov\'asz Local Lemma. It remains an interesting open question whether the answer to Problem \ref{prob:Wig} is polynomial.

\bibliographystyle{abbrv}
\bibliography{bibliography}

\end{document}